\documentclass[a4paper,11pt]{article}
\usepackage{mathptmx}      
\usepackage{amssymb,amsthm}
\usepackage{graphicx}
\usepackage{fancyhdr}
\usepackage[left=3.3cm,top=3.3cm,bottom=3.3cm,right=3.3cm]{geometry}
\usepackage{hyperref}


\newcommand{\bR}{\mathbb{R}}
\newcommand{\bC}{\mathbb{C}}

\newcommand{\bN}{\mathbb{N}}

\newcommand{\bE}{\mathbb{E}}
\newcommand{\cP}{\mathcal{P}}

\newcommand{\ext}{\mathrm{ext}\,}
\newcommand{\dom}{\mathrm{dom}\,}

\newcommand{\cl}{\mathrm{cl}\,}
\newcommand{\co}{\mathrm{co}\,}

\newcommand{\tr}{\mathrm{tr}\,}
\newcommand{\Int}[1]{\mathrm{Int}({#1})}

\newcommand{\grant}{EPSRC grant EP/H031936/1}

\newtheorem{theorem}{Theorem}
\newtheorem{corollary}{Corollary}

\newtheorem{proposition}{Proposition}
\theoremstyle{definition}
\newtheorem{definition}{Definition}
\newtheorem{example}{Example}

\theoremstyle{remark}
\newtheorem{remark}{Remark}


\begin{document}


\title{Optimal measures and Markov transition kernels\thanks{This work was supported by \grant.}}

\author{Roman V. Belavkin}
%

\date{Draft of 29 November 2011}

\maketitle              

\thispagestyle{fancy}
\lhead{}
\chead{The final publication is available at {\tt www.springerlink.com}\\
In {\em J Glob Optim} DOI: 10.1007/s10898-012-9851-1}
\rhead{} \lfoot{} \cfoot{} \rfoot{}
\renewcommand{\headrulewidth}{.4pt}

\begin{abstract}
We study optimal solutions to an abstract optimization problem for measures, which is a generalization of classical variational problems in information theory and statistical physics.  In the classical problems, information and relative entropy are defined using the Kullback-Leibler divergence, and for this reason optimal measures belong to a one-parameter exponential family.  Measures within such a family have the property of mutual absolute continuity.  Here we show that this property characterizes other families of optimal positive measures if a functional representing information has a strictly convex dual.  Mutual absolute continuity of optimal probability measures allows us to strictly separate deterministic and non-deterministic Markov transition kernels, which play an important role in theories of decisions, estimation, control, communication and computation.  We show that deterministic transitions are strictly sub-optimal, unless information resource with a strictly convex dual is unconstrained.  For illustration, we construct an example where, unlike non-deterministic, any deterministic kernel either has negatively infinite expected utility (unbounded expected error) or communicates infinite information.

\end{abstract}

\section{Introduction}
\label{sec:intro}

This work was motivated by the fact that probability measures within an exponential family, which are solutions to variational problems of information theory and statistical physics, are mutually absolutely continuous.  Thus, we begin by clarifying and discussing this property in the simplest setting.  Let $\Omega$ be a finite set, and let $x:\Omega\rightarrow\bR$ be a real function.  Consider the family $\{y_\beta\}_x$ of real functions $y_\beta:\Omega\rightarrow\bR$, indexed by $\beta\geq0$:
\begin{equation}
y_\beta(\omega)=e^{\beta x(\omega)}\,y_0(\omega)\,,\quad y_0(\omega)\geq0
\label{eq:exponential}
\end{equation}
The elements of $\{y_\beta\}_x$ represent one-parameter exponential measures $y_\beta(E)=\sum_{\omega\in E}y_\beta(\omega)$ on $\Omega$, and normalized elements $P_\beta(\omega)=y_\beta(\omega)/y_\beta(\Omega)$ are the corresponding exponential probability measures.  Of course, exponential measures can be defined on an infinite set, for example, as elements of the Banach space $Y:=\mathcal{M}(\Omega,\bR,\|\cdot\|_1)$ of real Radon measures on a locally compact space $\Omega$ \cite{Bourbaki63}.  In this case, $x$ and $e^x$ are elements of the normed algebra $X:=C_c(\Omega,\bR,\|\cdot\|_\infty)$ of continuous functions with compact support in $\Omega$.  As will be clarified later, $Y$ can be considered not only as the dual of $X$, but also as a module over algebra $X$, which explains the definition of an exponential family~(\ref{eq:exponential}) as multiplication of $y_0\in Y$ by elements of $X$.  Furthermore, for some $y_0$, exponential measures are finite even if function $x$ is not continuous, has non-compact support and unbounded.  A similar construction can be made in the case when $X$ is a non-commutative $\ast$-algebra, such as the algebra of compact Hermitian operators on a separable Hilbert space used in quantum probability theory.  However, quantum exponential measures can be defined in different ways, such as $y_\beta:=\exp(\beta x+\ln y_0)$ or $y_\beta:=y_0^{1/2}\,\exp(\beta x)\,y_0^{1/2}$, which are not equivalent.

One property that characterizes all these exponential measures is that elements within a family are mutually absolutely continuous.  We remind that measure $y$ is absolutely continuous with respect to measure $z$, if $z(E)=0$ implies $y(E)=0$ for all $E$ in the $\sigma$-ring of subsets of $\Omega$.  Mutual absolute continuity is the case when the implication holds in both directions.  It is easy to see from equation~(\ref{eq:exponential}) that exponential measures within one family have exactly the same support and are mutually absolutely continuous.  This property is particularly important, when measures are considered on a composite system, such as a direct product of two sets $\Omega=A\times B$.  Normalized measures on such $\Omega$ are joint probability measures $P(A\times B)$ uniquely defining conditional probabilities $P(A\mid B)$ (i.e. Markov transition kernels).  Observe now that if $P(A\times B)$ and $P(A)P(B)$ (product of marginals) are mutually absolutely continuous, then $P(a\mid b)>0$ for all $a\in A$ such that $P(a)>0$.  Conditional probability with this property is non-deterministic, because several elements $a\in A$ can be in the `image' of $b\in B$.  Clearly, all joint probability measures within an exponential family define such non-deterministic transition kernels.

Another, perhaps the most important, property of exponential families is that they are, in a certain sense, optimal.  It is well-known in mathematical statistics that the lower bound for the variance of the unbiased estimator of an unknown parameter, defined by the Rao-Cramer inequality, is attained if and only if the probability distribution is a member of an exponential family \cite{Cramer46,Rao45}.  In statistical physics, it is known that exponential distributions (i.e. Boltzmann or Gibbs distributions) maximize entropy of a thermodynamical system under a constraint on energy \cite{Jaynes57}.  In information theory, exponential transition kernels are known to maximize a channel capacity \cite{Shannon48,Stratonovich65,Stratonovich75:_inf}, and they are used in some randomized optimization techniques (e.g. \cite{Kirkpatrick83}) as well as various machine learning algorithms \cite{Wainwright-Jordan03}.  A one-parameter exponential family has been studied in information geometry, and it was shown to be a Banach space with an Orlicz norm \cite{Pistone-Sempi95}.  Similar constructions have been considered in quantum probability \cite{Bobkov-Zegalinski05,Streater04}.

Optimality of exponential families of measures on one hand and their mutual absolute continuity on the other is a particularly interesting combination, because it seems that for the first time we have an optimality criterion, with respect to which all deterministic transitions between elements of a composite system are strictly sub-optimal.  This appears to have importance not only for information and communication theories, but also for theories of computational and algorithmic complexity, because Markov transition kernels can be used to represent various input-output systems, including computational systems and algorithms.  Thus, understanding the relation between mutual absolute continuity within some families of measures and their optimality was the main motivation for this work.

It is well-known, and will be reminded later in this paper, that a one-parameter exponential family of probability measures is the solution to a variational problem of minimizing Kullback-Leibler (KL) divergence \cite{Kullback59} of one probability measure from another subject to a constraint on the expected value.  In fact, the logarithmic function, which appears in the definition of the KL-divergence, is precisely the reason why the exponential function appears in the solutions.  However, mutual absolute continuity, which for composite systems implies the non-deterministic property of conditional probabilities, is not exclusive to families of exponential measures.  Indeed, geometrically, this property simply means that measures are in the interior of the same positive cone, defined by their common support.  Thus, our method is based on a generalization of the above mentioned variational problem by relaxing the definition of information and then employing geometric analysis of its solutions.

In the next section, we introduce the notation, define the generalized optimization problem and recall some basic relevant facts.  An abstract information resource will be represented by a closed functional $F:Y\rightarrow\bR\cup\{\infty\}$, defined on the space $Y$ of measures, and such that its values $F(y)$ can be associated with values $I(y,y_0)$ of some information distance (e.g. the KL-divergence).  In Section~\ref{sec:optimality} we establish several properties of optimal solutions.  In particular, we prove in Proposition~\ref{lm:monotone} that the optimal value function is order isomorphism putting information in duality with expected utility of an optimal system.  These results are then used in Section~\ref{sec:main} to prove a theorem relating mutual absolute continuity of optimal positive measures to strict convexity of functional $F^\ast$, the Legendre-Fenchel dual of $F$ representing information resource.  We show that strict convexity of $F^\ast$ is necessary to separate different variational problems by optimal measures, and for this reason it appears to be a natural minimal requirement on information, generalizing the additivity axiom.  Because proof of mutual absolute continuity does not depend on commutativity of algebra $X$, pre-dual of $Y$, these results apply to a general, non-commutative setting used in quantum probability and information theories.  In Section~\ref{sec:kernels}, we discuss optimal Markov transition kernels (conditional probabilities) in the classical (commutative) setting, which is done for simplicity reasons.  We shall recall several facts about transition kernels, information capacity of memoryless channels they represent and the corresponding variational problems.  The main result of this section is a theorem separating deterministic and non-deterministic kernels.  We show how mutual absolute continuity of optimal Markov transition kernels implies that optimal transitions are non-deterministic; deterministic transitions are strictly suboptimal if information, understood broadly here, is constrained.  This result will be illustrated by an example, where any deterministic kernel either has a negatively infinite expected utility (unbounded expected error) or communicates infinite information; a non-deterministic kernel, on the other hand, can have both finite expected utility and finite information.   In the end of the section we shall consider applications of this work to theories of algorithms and computational complexity.  We shall discuss how deterministic and non-deterministic algorithms can be represented by Markov transition kernels between the space of inputs and the space of output sequences, and how constraints on the expected utility or complexity of the algorithms are related to variational problems studied in this work.  The paper concludes by a summary and discussion of the results.

\section{Preliminaries}
\label{sec:preliminaries}

This work is based on a generalization of classical variational problems of information theory and statistical physics, which can be formulated as follows.  Let $(\Omega,\mathcal{R})$ be a measurable set and let $\mathcal{P}(\Omega)$ be the set of all Radon probability measures on $\Omega$.  We denote by $\bE_p\{x\}$ the expected value of random variable $x:\Omega\rightarrow\bR$ with respect to $p\in\mathcal{P}(\Omega)$.  An information distance is a function $I:\mathcal{P}\times\mathcal{P}\rightarrow\bR\cup\{\infty\}$ that is closed (lower semicontinuous) in each argument.  An important example is the Kullback-Leibler divergence $I_{KL}(p,q):=\bE_p\{\ln(p/q)\}$ \cite{Kullback59}.  We remind that $\bE_p\{x\}$ is linear in $p$, and $I_{KL}(p,q)$ is convex.  The variational problem is formulated as follows:
\begin{equation}
\mbox{maximize (minimize)}\quad\bE_p\{x\}\quad\mbox{subject to}\quad\bE_p\{\ln(p/q)\}\leq\lambda
\label{eq:max-i}
\end{equation}
where optimization is over probability measures $p\in\cP$.  This problem can be considered as linear programming with an infinite number of linear constraints, and it can be formulated as the following convex programming problem:
\begin{equation}
\mbox{minimize}\quad\bE_p\{\ln(p/q)\}\quad\mbox{subject to}\quad\bE_p\{x\}\geq\upsilon\quad\Bigl(\bE_p\{x\}\leq\upsilon\Bigr)
\label{eq:min-i}
\end{equation}
Figure~\ref{fig:simplex-kl} illustrates these variational problems on a $2$-simplex of probability measures over a set of three elements with the uniform distribution $q(\omega)=1/3$ as the reference measure.

\begin{figure}[!ht]
\begin{center}
\input{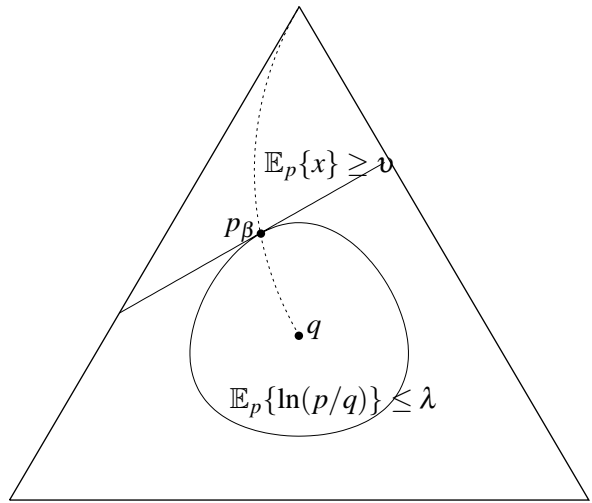}
\end{center}
\caption{$2$-Simplex $\cP$ of probability measures over set $\Omega=\{\omega_1,\omega_2,\omega_3\}$ with level sets of expected utility $\bE_p\{x\}=\upsilon$ and the Kullback-Leibler divergence $\bE_p\{\ln(p/q)\}=\lambda$.  Probability measure $p_\beta$ is the solution to variational problems~(\ref{eq:max-i}) and (\ref{eq:min-i}).  The family $\{p_\beta\}_x$ of solutions, shown by dashed curve, belongs to the interior of $\cP$.}
\label{fig:simplex-kl}
\end{figure}

In optimization and information theories, $\bE_p\{x\}$ represents expected utility to be maximized or expected cost to be minimized.  In physics, it represents internal energy.  Information distance $I_{KL}(p,q)$ is also called relative entropy, and the inequality $I_{KL}(p,q)\leq\lambda$ represents an {\em information constraint}.  Depending on the domain of definition of the probability measures, the information constraint may have different meanings, such as a lower bound on entropy (i.e. irreducible uncertainty), partial observability of a random variable, a constraint on the amount of statistical information (i.e. a number of independent tests, questions or bits of information), on communication capacity of a channel, on memory of a computational device and so on \cite{Stratonovich75:_inf}.  These variational problems can also be formulated in quantum physics, where $x$ is an element of a non-commutative algebra of observables, and $p$, $q$ are quantum probabilities (states).

As is well-known, solutions to problems~(\ref{eq:max-i}) and (\ref{eq:min-i}) are elements of an exponential family of probability distributions.  Before we define an appropriate generalization of these problems, we remind some axiomatic principles underpinning the choice of functionals.

\subsection{Axioms behind the choice of functionals}
\label{sec:axioms}

The choice of linear objective functional $\bE_p\{x\}$ has axiomatic foundation in game theory \cite{Neumann-Morgenstern}, where $\Omega$ is equipped with total pre-order $\lesssim$, called the {\em preference relation}, and function $x:\Omega\rightarrow\bR$ is its {\em utility representation}: $\omega_1\lesssim\omega_2$ if and only if $x(\omega_1)\leq x(\omega_2)$.  Because the quotient set $\Omega/\sim$ of a pre-ordered set with a utility function is isomorphic to a subset of the real line, it is separable and metrizable by $\rho([a],[b])=|x(a)-x(b)|$, and therefore every probability measure on the completion of $\Omega/\sim$ is Radon (e.g. by Ulam's theorem for probability measures on Polish spaces).

The set $\cP(\Omega)$ of all classical probability measures on $\Omega$ is a simplex with Dirac measures $\delta_\omega$ comprising the set $\ext\cP$ of its extreme points \cite{Phelps66}.   The question that has been discussed extensively is: How to extend pre-order $\lesssim$, which was defined on $\Omega\equiv\ext\cP$, to the whole $\cP$?  It was shown in \cite{Neumann-Morgenstern} that linear (or affine) functional $\bE_p\{x\}$ is the only functional that makes the extended pre-order $(\cP,\lesssim)$ compatible with the vector space structure of $Y\supset\cP$ and Archimedian.  We remind that for the corresponding pre-order $(Y,\lesssim)\supset(\cP,\lesssim)$ this is defined by the axioms:
\begin{enumerate}
\item $q\lesssim p$ implies $q+r\lesssim p+r$ and $\alpha q\lesssim\alpha p$ for all $r\in Y$ and $\alpha\geq0$.
\item $n q\lesssim p$ for all $n\in\bN$ implies $q\lesssim0$.
\end{enumerate}
In this paper we shall follow this formalism assuming that the objective functional is linear.  We note that non-linearity may arise in certain dynamical systems, where $x$ may change with time, but this will not be considered in this work, because our focus is on optimization problems with respect to some fixed preference relation $\lesssim$ or utility $x$ on $\Omega$.  A non-commutative (quantum) analogue of a utility function was given in \cite{Belavkin09:_qbic09} by a Hermitian operator $x$ on a separable Hilbert space (an observable) with its real spectrum representing a total pre-order on its eigen states.  The principal difference with the classical theory is the existence of incompatible (non-commutative) utility operators.

As mentioned earlier, information constraints may be related to different phenomena (e.g. uncertainty, observability, statistical data, communication capacity, memory, etc).  However, in information theory they often have been represented by functionals, such as relative entropy or Shannon information, which are defined using the Kullback-Leibler divergence $I_{KL}$.  Its choice is also based on a number of axioms \cite{Csiszar91,Khinchin57,Shannon48}, such as additivity: $I_{KL}(p_1p_2,q_1q_2)=I_{KL}(p_1,q_1)+I_{KL}(p_2,q_2)$.  In fact, this axiom is precisely the reason why the logarithm function appears in its definition (i.e. as homomorphism between multiplicative and additive groups of $\bR$).  There is, however, an abundance of other information distances and metrics, such as the Hellinger distance, total variation and the Fisher metrics.  Although they often fail to have a proper statistical interpretation \cite{Chentsov72}, there has been a renewed interest in using different information distances and contrast functions in applications to compare distributions (e.g. see \cite{Amari-Ohara11,Banerjee05,Naudts08}).

For reasons outlined above, we shall generalize problems~(\ref{eq:max-i}) and (\ref{eq:min-i}) by considering an abstract information distance or resource, which will be used to define a subset of feasible solutions.  In addition, we shall not restrict the problems to normalized measures, which makes the exposition a lot simpler.  Normalization can be performed at a later stage.  We now define an appropriate algebraic structure.

\subsection{Dual algebraic structures}
\label{sec:dual-algebras}

Let $X$ and $Y$ be complex linear spaces put in duality via bilinear form $\langle\cdot,\cdot\rangle:X\times Y\rightarrow\bC$:
\[
\langle x,y\rangle=0\,,\ \forall\, x\in X\ \Rightarrow y=0\,,\qquad
\langle x,y\rangle=0\,,\ \forall\, y\in Y\ \Rightarrow x=0
\]
We denote by $X^\sharp$ the algebraic dual of $X$, by $X'$ the continuous dual of a locally convex space $X$ and by $X^\ast$ the complete normed dual space of $(X,\|\cdot\|)$.  The same notation applies to dual spaces of $Y$.  The results will be derived using only the facts that $X$ and $Y$ are ordered linear spaces in duality.  These spaces, however, can have richer algebraic structures, which we briefly outline here.

Space $X$ is closed under an associative, but generally
non-commutative binary operation $\cdot:X\times X\rightarrow X$
(e.g. pointwise multiplication or matrix multiplication) and
involution as a self-inverse, antilinear map $\ast:X\rightarrow X$
reversing the multiplication order: $(x^\ast z)^\ast=z^\ast x$.  Thus,
$X$ is a $\ast$-algebra.  The set of all Hermitian elements $x=x^\ast$ is a real subspace of $X$, and if every $x^\ast x$ has positive real spectrum, then $X$ is called a {\em total} $\ast$-algebra, in which the spectrum of all Hermitian elements is real.  In this case, Hermitian elements $x^\ast x$ form a pointed convex cone $X_+$, generating $X=X_+-X_+$.

The dual space $Y$ is closed under the transposed involution
$\ast:Y\rightarrow Y$, defined by $\langle x,y^\ast\rangle=\langle
x^\ast, y\rangle^\ast$.  It is ordered by a positive cone
$Y_+:=\{y:\langle x^\ast x,y\rangle\geq0\,,\ \forall\,x\in X\}$, dual
of $X_+$, and it has order unit $y_0\in Y_+$ (also called a reference
measure), which is a strictly positive linear functional: $\langle
x^\ast x,y_0\rangle>0$ for all $x\neq0$.  If the pairing
$\langle\cdot,\cdot\rangle$ has the property that for each $z\in X$
there exists a transposed element $z'\in Y$ such that $\langle
zx,y\rangle=\langle x,z' y\rangle$, then $Y\supset X$ is a left
(right) module over $X$ with respect to the transposed left (right)
action $y\mapsto z' y$ ($y\mapsto yz^{\ast\prime\ast}$) of $X$ on $Y$
such that $(xz)'=z'x'$ and $\langle x,yz^{\ast\prime\ast}\rangle=
\langle x^\ast,z^{\ast\prime}y^\ast\rangle^\ast= \langle z^\ast
x^\ast,y^\ast\rangle^\ast=\langle xz,y\rangle$ (see
\cite{vpb11:_qbic10}, Appendix).  In many practical cases, the pairing
$\langle\cdot,\cdot\rangle$ is {\em central} (or {\em tracial}), so
that the left and right transpositions act identically on $y_0$:
$z^{\ast\prime}y_0=y_0z^{\prime\ast}$ for all $z\in X$.  In this case,
the element $z^{\ast\prime}y_0=y_0z^{\prime\ast}\in Y$ can be
identified with a complex conjugation of $z\in X$.

Two primary examples of a total $\ast$-algebra $X$, which are important in this work, are the commutative algebra $C_c(\Omega,\bC,\|\cdot\|_\infty)$ of continuous functions with compact support in a locally compact topological space $\Omega$ and the non-commutative algebra $C_c(\mathcal{H},\bC,\|\cdot\|_\infty)$ of compact Hermitian operators on a separable Hilbert space $\mathcal{H}$.  The corresponding examples of dual space $Y=X^\ast$ are the Banach space $\mathcal{M}(\Omega,\bC,\|\cdot\|_1)$ of complex signed Radon measures on $\Omega$ and its non-commutative generalization $\mathcal{M}(\mathcal{H},\bC,\|\cdot\|_1)$.  Note that these examples of algebra $X$ are generally incomplete and contain only an approximate identity.  However, by $X$ we shall understand here an extended algebra that contains additional elements.  In particular, $X$ will contain the unit element $1\in X$ such that $\langle 1,y\rangle=\|y\|_1$ if $y\geq0$ (i.e. $1\in X$ coincides on $Y_+$ with the norm $\|\cdot\|_1$, which is additive on $Y_+$).  Furthermore, because constraints in variational problems~(\ref{eq:max-i}) or (\ref{eq:min-i}), or their generalizations, define a proper subset of space $Y$, we can consider random variables represented by elements $x\in Y^\sharp$ that are outside of the Banach space $Y^\ast$ (e.g. unbounded functions or operators).

Below are three main examples of pairing $X$ and $Y$ by a sum, an integral or trace:
\begin{equation}
\langle x,y\rangle:=\sum_\Omega x(\omega)\,y(\omega)\,,
\qquad
\langle x,y\rangle:=\int_\Omega x(\omega)\,dy(\omega)\,,
\qquad
\langle x,y\rangle:=\tr\{xy\}
\label{eq:pairing}
\end{equation}
Although the linear functionals $x(y)=\langle x,y\rangle$ are generally complex-valued, we shall assume, without further mentioning, that $\langle\cdot,\cdot\rangle$ is evaluated on Hermitian elements $x=x^\ast$ and $y=y^\ast$ so that $\langle x,y\rangle\in\bR$.  In particular, the expected value $\bE_p\{x\}=\langle x,p\rangle\in\bR$, where $x$ is Hermitian and $p$ is positive.  Thus, the expressions `maximize (minimize) $x(y)=\langle x,y\rangle$' should be understood accordingly as maximization or minimization of a real functional.

\subsection{Generalized variational problems for measures}

Normalized non-negative measures (i.e. probability measures) are elements of the set:
\[
\mathcal{P}:=\{y\in Y:\,y\geq0\,,\ \langle 1,y\rangle=1\}
\]
This is a weakly compact convex set, and therefore $\cP=\cl\co\ext\cP$ by the Krein-Milman theorem.  In the commutative case, $\cP$ is a simplex, because each $p\in\cP$ is uniquely represented by extreme points $\delta\in\ext\cP$ \cite{Phelps66}.  In information geometry $\cP$ is referred to as {\em statistical manifold}, and its topological properties have been studied by defining different information distances $I:\cP\times\cP\rightarrow\bR_+\cup\{\infty\}$ \cite{Amari85,Chentsov72,Pistone-Sempi95}.  We can generalize this by considering information resource as a functional, defined for all positive or Hermitian elements.

\begin{figure}[!ht]
\begin{center}
\input{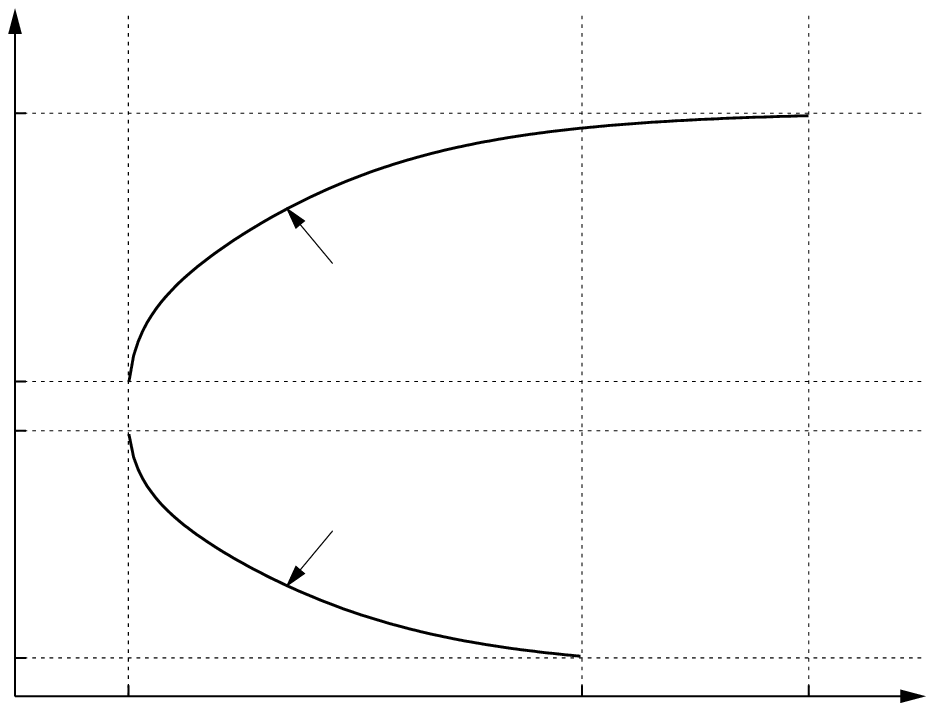}
\end{center}
\caption{Optimal value functions $\upsilon=\overline x(\lambda)$ and $\upsilon=\underline x(\lambda)$.  The value $\lambda_0=\inf F$ corresponds to $\upsilon\in[\underline\upsilon_0,\overline\upsilon_0]$.  Special values $\overline\lambda$, $\underline\lambda$ of the constraint $\lambda\geq F(y)$ correspond respectively to optimal values $\overline\upsilon$ and $\underline\upsilon$.}
\label{fig:ui-branches}
\end{figure}

Let $F:Y\rightarrow\bR\cup\{\infty\}$ be a closed functional, so that $F$ is finite at some $y\in Y$, and sublevel sets $\{y:F(y)\leq\lambda\}$ are closed in the weak topology $\sigma(Y,X)$ for each $\lambda$.  Because $-\infty$ is not included in the definition of closed $F$, it is also lower-semicontinuous \cite{Rockafellar74}.  We shall assume without further mentioning that the effective domain $\dom F:=\{y:F(y)<\infty\}$ has non-empty algebraic interior.  In addition, if $Y$ is defined over the field of complex numbers, we shall also assume that $\dom F$ contains only Hermitian elements $y=y^\ast$ (e.g. $\dom F\subseteq Y_+$).

Variational problems~(\ref{eq:max-i}) and (\ref{eq:min-i}) are generalized by considering all, not necessarily positive or normalized measures, and by using any closed functional $F$ to define an information resource.   The optimal values achieved by solutions to these problems are defined by the following {\em optimal value functions}:
\begin{eqnarray}
\overline x(\lambda)&:=&\sup\{\langle x,y\rangle:F(y)\leq\lambda\}
\label{eq:support}\\
\underline x(\lambda)&:=&\inf\{\langle x,y\rangle:F(y)\leq\lambda\}
\label{eq:anti-support}\\
\overline x^{-1}(\upsilon)&:=&\inf\{F(y):\langle x,y\rangle\geq\upsilon\}
\label{eq:inverse-support}\\
\underline x^{-1}(\upsilon)&:=&\inf\{F(y):\langle x,y\rangle\leq\upsilon\}
\label{eq:inverse-anti-support}
\end{eqnarray}
We define $\overline x(\lambda):=-\infty$, if $\lambda<\inf F$, and $\overline x(\infty):=\lim\overline x(\lambda)$ as $\lambda\to\infty$.  Observe that $\underline x(\lambda)=-\overline{(-x)}(\lambda)$ and $\underline x^{-1}(\upsilon)=\overline{(-x)}^{-1}(-\upsilon)$.  Thus, it is sufficient to study only the properties of $\overline x(\lambda)$.  Figure~\ref{fig:ui-branches} depicts schematically the optimal value functions $\overline x(\lambda)$ and $\underline x(\lambda)$.  It is clear from the definition that $\overline x(\lambda)$ is a non-decreasing extended real function, and $\underline x(\lambda)$ is non-increasing.  It will be shown also in the next section that $\overline x(\lambda)$ is concave, and $\underline x(\lambda)$ is convex (Proposition~\ref{lm:monotone}).  Because sets $\{y:F(y)\leq\lambda\}$ may be unbalanced and unbounded, the functions may not be reflections of each other in the sense that $\overline x(\lambda)-\upsilon_0\neq\upsilon_0-\underline x(\lambda)$ for all $\upsilon_0$, and one or both functions can be empty.  The definition of the optimal value functions~(\ref{eq:support})--(\ref{eq:inverse-anti-support}) in terms of functional $F(y)$ of one variable, unlike information distance $I(y,y_0)$, allows for considering the case when $\inf F$ is not achieved at any $y_0\in Y$.

In addition to $\lambda_0:=\inf F$, we define two special values $\overline\lambda$ and $\underline\lambda$ of functional $F$ as follows:
\begin{equation}
\overline x(\overline\lambda):=\sup\{\langle x,y\rangle:y\in\dom F\}\,,\qquad
\underline x(\underline\lambda):=\inf\{\langle x,y\rangle:y\in\dom F\}
\label{eq:lambda_x}
\end{equation}
Thus, problems of maximization or minimization of $x(y)=\langle x,y\rangle$ subject to constraints $F(y)\leq\overline\lambda$ or $F(y)\leq\underline\lambda$ respectively are equivalent to unconstrained problems on $\dom F$.  The corresponding optimal values are denoted $\overline\upsilon=\overline x(\overline\lambda)$ and $\underline\upsilon=\underline x(\underline\lambda)$, as shown on Figure~\ref{fig:ui-branches}. The reason for defining these values is that generally $\overline\lambda\leq\infty$, $\underline\lambda\leq\infty$ and $\overline\lambda\neq\underline\lambda$ (see Figure~\ref{fig:ui-branches}).  Solutions to unconstrained problems may correspond to large, possibly infinite values $\overline\lambda$ or $\underline\lambda$, and therefore they can be considered unfeasible.  Subsets of feasible solutions will be defined by constraints $F(y)\leq\lambda<\overline\lambda$ or $F(y)\leq\lambda<\underline\lambda$.

In addition, we define the following special values:
\begin{equation}
\overline\upsilon_0:=
\lim_{\lambda\downarrow\inf F}\sup\{\langle x,y\rangle:F(y)\leq\lambda\}\,,\qquad
\underline\upsilon_0:=
\lim_{\lambda\downarrow\inf F}\inf\{\langle x,y\rangle:F(y)\leq\lambda\}
\label{eq:upsilon_0}
\end{equation}
If there exists a set $\partial F^\ast(0)\subset\dom F$ such that $\inf F=F(y_0)$ for all $y_0\in\partial F^\ast(0)$, then $\overline\upsilon_0=\sup\{\langle x,y_0\rangle:y_0\in\partial F^\ast(0)\}$ and $\underline\upsilon_0=\inf\{\langle x,y_0\rangle:y_0\in\partial F^\ast(0)\}$.  If $y_0$ is unique, then $\overline\upsilon_0=\underline\upsilon_0$; otherwise $\overline\upsilon_0\geq\underline\upsilon_0$ (see Figure~\ref{fig:ui-branches}).  Elements $y_0\in\partial F^\ast(0)$ represent trivial solutions, because they correspond to constraint $\lambda_0:=\inf F$ in functions $\overline x(\lambda)$ and $\underline x(\lambda)$.  Constraints $\langle x,y\rangle\geq\upsilon>\overline\upsilon_0$ and $\langle x,y\rangle\leq\upsilon<\underline\upsilon_0$ in the inverse functions $\overline x^{-1}(\upsilon)$ and $\underline x^{-1}(\upsilon)$ ensure that $F(y)>\lambda_0$, and the solutions are non-trivial.

\subsection{Some facts about subdifferentials of dual convex functions}

In the next section, we show that solutions to the generalized variational problems with optimal values~(\ref{eq:support})--(\ref{eq:inverse-anti-support}), if exist, are elements of a subdifferential of functional $F^\ast$, dual of $F$.  We remind that $F^\ast:X\rightarrow\bR\cup\{\infty\}$ is the Legendre-Fenchel transform of $F$:
\[
F^\ast(x):=\sup\{\langle x,y\rangle-F(y)\}
\]
and it is aways closed and convex (e.g. see
\cite{Rockafellar74,Tikhomirov90:_convex}).  Condition
$F^{\ast\ast}=F$ implies $F$ is closed and convex.  Otherwise, the
epigraph of $F^{\ast\ast}$ is a convex closure of the epigraph of $F$
in $Y\times\bR$.  Closed and convex functionals are continuous on the
(algebraic) interior of their effective domains (e.g. see
\cite{Moreau67} or \cite{Rockafellar74}, Theorem~8), and they have
the property
\begin{equation}
x\in\partial F(y)\quad\iff\quad\partial F^\ast(x)\ni y
\label{eq:subdifferentials}
\end{equation}
where set $\partial F(y):=\{x:\langle x,z-y\rangle\leq F(z)-F(y)\,,\ \forall\,z\in Y\}$ is {\em subdifferential} of $F$ at
$y$, and its elements are called {\em subgradients}.  In particular,
$0\in \partial F(y_0)$ implies $F(y_0)\leq F(y)$ for all $y$
(i.e. $\inf F=F(y_0)$).  We point out that the notions of subgradient
and subdifferential make sense even if $F$ is not convex or finite at
$y$, but non-empty $\partial F(y)$ implies $F(y)<\infty$ and
$F(y)=F^{\ast\ast}(y)$, $\partial F(y)=\partial F^{\ast\ast}(y)$ (\cite{Rockafellar74}, Theorem~12).\footnote{It is possible, however, that $F(y)<\infty$, but $\partial F(y)=\varnothing$ (e.g. see \cite{Tikhomirov90:_convex}, Chapter~1, Section~2.4, Example~6d).}  Functional $F^\ast$ is strictly convex if and only if $\partial F^\ast(x)\ni y$ is injective, so that the inverse mapping $\partial F(y)=\{x\}$ is single-valued.

Recall also that subdifferential $\partial F^\ast:X\rightarrow 2^Y$ of a convex function is an example of monotone operator \cite{Kachurovskii68}:
\begin{equation}
\langle x_1-x_2,y_1-y_2 \rangle \geq0\,,\quad\forall\,y_i\in\partial F^\ast(x_i)
\label{eq:monotonicity}
\end{equation}
The inequality is strict for all $x_1\neq x_2$ if and only if $\partial F^\ast(x)\ni y$ is injective (i.e. $\partial F^\ast$ is strictly monotone).

We remind also that $H:Y\rightarrow\bR\cup\{-\infty\}$ is {\em
  concave} if $F(y)=-H(y)$ is convex.  The dual of $H$ in concave
sense is $H^\ast(x):=\inf\{\langle x,y\rangle-H(y)\}$.  By analogy,
one defines {\em supgradient} and {\em supdifferential} of a concave
function \cite{Rockafellar74}.

\section{General properties of optimal solutions and the optimal value functions}
\label{sec:optimality}

In this section, we apply the standard method of Lagrange multipliers
to derive solutions $y_\beta$ achieving the optimal value $\overline
x(\lambda)=\langle x,y_\beta\rangle$.  Then we shall study existence
of solutions and monotonic properties of the optimal value
functions~(\ref{eq:support})--(\ref{eq:inverse-anti-support}).

\subsection{Optimality conditions}

\begin{proposition}[Necessary and sufficient optimality conditions]
Element $y_\beta\in Y$ maximizes linear functional $x(y)=\langle
x,y\rangle$ on sublevel set $\{y:F(y)\leq\lambda\}$ of a closed
functional $F:Y\rightarrow\bR\cup\{\infty\}$ if and only if the
following conditions hold
\[
y_\beta\in\partial F^\ast(\beta x)\,,\qquad F(y_\beta)=\lambda
\]
where parameter $\beta^{-1}>0$ is related to $\lambda$ via $\beta^{-1}\in \partial\overline x(\lambda)$.
\label{lm:optimality-conditions}
\end{proposition}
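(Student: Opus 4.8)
The plan is to recognise the two stated conditions as the Karush--Kuhn--Tucker conditions for the convex program of maximising the linear functional $\langle x,y\rangle$ subject to $F(y)\le\lambda$, with $\beta^{-1}$ playing the role of the Lagrange multiplier attached to the constraint. The bridge to the stated form is the duality (\ref{eq:subdifferentials}): the condition $y_\beta\in\partial F^\ast(\beta x)$ is \emph{equivalent} to $\beta x\in\partial F(y_\beta)$, and the latter is exactly the stationarity of the Lagrangian $L(y,\mu)=\langle x,y\rangle-\mu\bigl(F(y)-\lambda\bigr)$ at $y_\beta$ when $\mu=\beta^{-1}$. Thus the proposition reduces to: stationarity, plus complementary slackness, plus the identification of the optimal multiplier with a supgradient of $\overline x$.

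For sufficiency I would argue directly, without invoking any duality theory. Suppose $y_\beta\in\partial F^\ast(\beta x)$ with $\beta>0$ and $F(y_\beta)=\lambda$. By (\ref{eq:subdifferentials}) this gives $\beta x\in\partial F(y_\beta)$, and the defining subgradient inequality yields $\langle\beta x,z-y_\beta\rangle\le F(z)-F(y_\beta)$ for every $z\in Y$. For any feasible competitor $z$, that is $F(z)\le\lambda=F(y_\beta)$, the right-hand side is $\le0$; dividing by $\beta>0$ gives $\langle x,z\rangle\le\langle x,y_\beta\rangle$. Hence $y_\beta$ attains the supremum (\ref{eq:support}). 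The accompanying relation $\beta^{-1}\in\partial\overline x(\lambda)$ then follows from the Fenchel equality $F^\ast(\beta x)=\beta\langle x,y_\beta\rangle-F(y_\beta)$, which rearranges to the envelope identity $\overline x(\lambda)=\beta^{-1}\bigl[\lambda+F^\ast(\beta x)\bigr]$.

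The necessity direction is the substantive one. Starting from a maximiser $y_\beta$, I would form the dual function $d(\mu)=\mu\lambda+\sup_y\{\langle x,y\rangle-\mu F(y)\}=\mu\lambda+\mu\,F^\ast(x/\mu)$ for $\mu>0$, so that weak duality gives $\overline x(\lambda)\le\inf_{\mu>0}d(\mu)$. Since each $d(\mu)$ is affine in $\lambda$, this infimal representation is automatically concave in $\lambda$, so I need not invoke concavity of $\overline x$ separately. The crux is \emph{strong} duality: I must produce a multiplier $\mu^\ast$ with $\overline x(\lambda)=d(\mu^\ast)$. This is where the standing assumption that $\dom F$ has non-empty algebraic interior enters as a Slater-type constraint qualification: for $\lambda>\inf F$ there is a strictly feasible $\bar y$ with $F(\bar y)<\lambda$, and a Hahn--Banach separation of $\epi F$ from the convex set of points strictly improving the objective yields a supporting hyperplane whose slope is the required $\mu^\ast\ge0$, with $\mu^\ast\in\partial\overline x(\lambda)$ by construction. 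Setting $\beta^{-1}:=\mu^\ast$, stationarity of $L(\cdot,\mu^\ast)$ at the saddle point gives $x\in\mu^\ast\partial F(y_\beta)$, that is $\beta x\in\partial F(y_\beta)$, equivalently $y_\beta\in\partial F^\ast(\beta x)$; and complementary slackness $\mu^\ast\bigl(F(y_\beta)-\lambda\bigr)=0$ together with $\beta^{-1}=\mu^\ast>0$ forces $F(y_\beta)=\lambda$.

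The main obstacle I anticipate is precisely this separation step in the infinite-dimensional, weakly closed setting: one must ensure the two convex sets can be properly separated, and that the separating functional lies in the pre-dual $X$ rather than in some larger bidual. This is exactly what the closedness of $F$ in $\sigma(Y,X)$ and the interiority of $\dom F$ are there to guarantee. A secondary point requiring care is showing $\beta^{-1}>0$ rather than $\beta^{-1}=0$: the degenerate case $\mu^\ast=0$ corresponds to the unconstrained regime $\lambda\ge\overline\lambda$ of (\ref{eq:lambda_x}), in which the constraint is inactive and the stated conditions do not apply. Restricting to the non-trivial range $\lambda<\overline\lambda$, where $\overline x$ is strictly increasing at $\lambda$ so that every supgradient is strictly positive, keeps the multiplier positive and makes complementary slackness bite.
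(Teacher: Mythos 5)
Your proposal follows essentially the same route as the paper -- Lagrange multipliers / KKT for the program $\sup\{\langle x,y\rangle: F(y)\leq\lambda\}$, with $\beta^{-1}$ as the multiplier and the bridge $y_\beta\in\partial F^\ast(\beta x)\iff\beta x\in\partial F(y_\beta)$ supplied by~(\ref{eq:subdifferentials}) -- but you are considerably more careful in two places. Your sufficiency argument (subgradient inequality at $y_\beta$, divided by $\beta>0$, tested against feasible competitors) is a clean direct proof, and your necessity argument makes explicit the strong-duality step that the paper only gestures at: the paper simply asserts that $\partial K(y_\beta,\beta^{-1})\ni0$ is ``necessary and sufficient'' because the Lagrangian is concave, without producing the multiplier. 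Your identification of the algebraic-interior assumption on $\dom F$ as the Slater-type qualification needed for the separation, and of the degenerate regime $\lambda\geq\overline\lambda$ where $\mu^\ast=0$, is exactly the right bookkeeping.

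There is, however, one concrete gap. The proposition assumes only that $F$ is closed, not convex, and both of your key steps tacitly require convexity. In the necessity direction you propose a Hahn--Banach separation involving $\epi F$, which fails if $\epi F$ is not convex; in the sufficiency direction, $y_\beta\in\partial F^\ast(\beta x)$ yields only $\beta x\in\partial F^{\ast\ast}(y_\beta)$, i.e.\ the subgradient inequality for $F^{\ast\ast}\leq F$, and the chain $\langle\beta x,z-y_\beta\rangle\leq F^{\ast\ast}(z)-F^{\ast\ast}(y_\beta)\leq\lambda-F^{\ast\ast}(y_\beta)$ does not close unless $F^{\ast\ast}(y_\beta)=F(y_\beta)$. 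The paper bridges exactly this point at the outset: since $\langle x,\cdot\rangle$ is linear, a maximizer over $C(\lambda)$ is also a boundary maximizer over $\cl\co C(\lambda)=\{y:F^{\ast\ast}(y)\leq\lambda\}$, so one may run the whole Lagrangian argument with $F^{\ast\ast}$ in place of $F$ and then note that $F(y_\beta)=F^{\ast\ast}(y_\beta)$, $\partial F(y_\beta)=\partial F^{\ast\ast}(y_\beta)$ at such points (\cite{Rockafellar74}, Theorem~12), strengthening $F^{\ast\ast}(y_\beta)=\lambda$ to $F(y_\beta)=\lambda$. Inserting that one observation at the start of your argument repairs both directions; without it, your separation step would fail for a closed non-convex $F$.
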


\begin{proof}
If $y_\beta$ maximizes $\langle x,y\rangle$ on sublevel set $C(\lambda):=\{y:F(y)\leq\lambda\}$, then it belongs to the boundary of $C(\lambda)$ (because $\langle x,\cdot\rangle$ is linear and $C(\lambda)$ is closed).  Moreover, $y_\beta$ belongs also to the boundary of a convex closure of $C(\lambda)$, because it is the intersection of all closed half-spaces $\{y:\langle x,y\rangle\leq\langle x,y_\beta\rangle\}$ containing $C(\lambda)$.  Observe also that
\[
\cl\co\{y:F(y)\leq\lambda\}=\{y:F^{\ast\ast}(y)\leq\lambda\}
\]
and therefore solutions satisfy condition
$F(y_\beta)=F^{\ast\ast}(y_\beta)$ and $\partial F(y_\beta)=\partial
F^{\ast\ast}(y_\beta)$ (e.g. see \cite{Rockafellar74}, Theorem~12).
Thus, the Lagrange function for the conditional extremum
in~(\ref{eq:support}) can be written in terms of $F^{\ast\ast}$ as
follows
\[
K(y,\beta^{-1})=\langle x,y\rangle+\beta^{-1}[\lambda-F^{\ast\ast}(y)]\,,
\]
where $\beta^{-1}$ is the Lagrange multiplier for the constraint
$\lambda\geq F^{\ast\ast}(y)$.  This Lagrange function is concave for
$\beta^{-1}\geq0$, and therefore condition $\partial
K(y_\beta,\beta^{-1})\ni 0$ is both necessary and sufficient for
$y_\beta$ and $\beta^{-1}$ to define its least upper bound, which
gives
\begin{eqnarray*}
\partial_y K(y_\beta,\beta^{-1})=x-\beta^{-1}\partial F^{\ast\ast}(y_\beta)\ni 0\,,
&\quad\Rightarrow\quad&y_\beta\in\partial F^\ast(\beta x)\\
\partial_{\beta^{-1}} K(y_\beta,\beta^{-1})=\lambda-F^{\ast\ast}(y_\beta)=0\,,
&\quad\Rightarrow\quad& F^{\ast\ast}(y_\beta)=\lambda
\end{eqnarray*}
Note that if $F\neq F^{\ast\ast}$, then generally $F^{\ast\ast}(y)\leq
F(y)$, and condition $F^{\ast\ast}(y_\beta)=\lambda$ must be replaced
by a stronger condition $F(y_\beta)=\lambda$.

Noting that $\overline x(\lambda)=\langle
x,y_\beta\rangle+\beta^{-1}[\lambda-F(y_\beta)]$, the Lagrange
multiplier is defined by $\partial\overline x(\lambda)\ni\beta^{-1}$.
Note that $\partial\overline x(\lambda)\geq0$, because $\overline
x(\lambda)$ is non-decreasing, and $\beta^{-1}=0$ if and only if
$F(y)\geq\overline\lambda$.
\end{proof}

\begin{remark}
The inverse optimal value $\overline x^{-1}(\upsilon)$, defined by equation~(\ref{eq:inverse-support}), is achieved by solutions $y_\beta$ given by similar
conditions.  Indeed, the corresponding Lagrange function is
\[
K(y,\beta)=F^{\ast\ast}(y)+\beta[\upsilon-\langle x,y\rangle]
\]
and the necessary and sufficient conditions are
\[
y_\beta\in\partial F^\ast(\beta x)\,,\qquad
\langle x,y_\beta\rangle=\upsilon
\]
where $\beta>0$ is related to $\upsilon$ via $\beta\in
\partial\overline x^{-1}(\upsilon)$.  We note also that conditions for optimal values  $\underline x(\lambda)=-\overline{(-x)}(\lambda)$ and $\underline x^{-1}(\upsilon)=\overline{(-x)}^{-1}(-\upsilon)$, defined by equations~(\ref{eq:anti-support}) and (\ref{eq:inverse-anti-support}), are identical to those in Proposition~\ref{lm:optimality-conditions} and above with the exceptions that $\beta^{-1}<0$ and $\beta<0$.
\label{eq:inverse-optimality-conditions}
\end{remark}

\subsection{Existence of solutions}

The existence of optimal solutions in Proposition~\ref{lm:optimality-conditions} is equivalent to finiteness of $\overline x(\lambda)$, which depends on the properties of sublevel set $C(\lambda):=\{y:F(y)\leq\lambda\}$ and linear functional $x(y)=\langle x,y\rangle$.  Clearly, the existence of solutions is guaranteed if $C(\lambda)$ is bounded in $(Y,\|\cdot\|)$ and $x\in Y^\ast$.  This setting, however, appears to be too restrictive.  First, the restriction of $x$ to Banach space $Y^\ast$ is not desirable in many applications.  Indeed, measures are often considered as elements of a Banach space with norm $\|\cdot\|_1$ of absolute convergence, and therefore $Y^\ast$ is complete with respect to the Chebyshev (supremum) norm $\|\cdot\|_\infty$.  Many objective functions, however, such as utility or cost functions, are expressed using unbounded forms, such as polynomials, logarithms and exponentials.  Second, the sublevel sets $C(\lambda)$ are generally unbalanced (i.e. if $I(y,y_0)\neq I(y_0,y)$ or $F(y_0+[y-y_0])\neq F(y_0-[y-y_0])$), which means that $\overline x(\lambda)\neq\overline{(-x)}(\lambda)$, and therefore $\overline x(\lambda)\in\bR$ does not imply $\overline{(-x)}(\lambda)\in\bR$.  In addition, sets $C(\lambda)$ can be unbounded in $(Y,\|\cdot\|)$ if we allow for measures that are not necessarily normalized.  In this case, finiteness of $\overline x(\lambda)$ is no longer guaranteed, even if $x\in Y^\ast$.  These considerations motivate us to define the most general class of linear functionals $x\in Y^\sharp$ (elements of algebraic dual) that admit optimal solutions to the generalized variational problems for measures and achieving finite optimal values for all constraints.

\begin{definition}[$F$-bounded linear functional]
An element $x\in Y^\sharp$ is bounded above (below) relative to a closed functional $F:Y\rightarrow\bR\cup\{\infty\}$ or {\em $F$-bounded above} ({\em below}) if it is bounded above (below) on sets $\{y:F(y)\leq\lambda\}$ for each $\lambda\in(\lambda_0,\overline\lambda)$ ($\lambda\in(\lambda_0,\underline\lambda)$).  We call $x\in Y^\sharp$ {\em $F$-bounded} if it is $F$-bounded above and below.
\end{definition}

Thus, bounded linear functionals $x\in Y^\ast$ are $\|\cdot\|$-bounded.  If $F(y)=I(y,y_0)$ is understood as information, then we speak of information-bounded functionals.  Although we do not address topological questions in this paper, we point out that the values $\overline x(\lambda)$ coincide with the values of support function $s_{C(\lambda)}(x):=\sup\{\langle x,y\rangle:y\in C(\lambda)\}$ of set $C(\lambda)$, and it generalizes a seminorm on $Y'$.  In fact, a seminorm can be defined for $F$-bounded elements as $\sup\{-\underline x(\lambda),\overline x(\lambda)\}=\sup\{s_{C(\lambda)}(-x),s_{C(\lambda)}(x)\}$, which means they form a topological vector space.  There are, however, elements $x\in Y^\sharp$ that are only $F$-bounded above or below, as will be illustrated in the next example.

\begin{example}
Let $\Omega=\bN$ and let $X$, $Y$ be the spaces of real sequences
$\{x(n)\}$ and $\{y(n)\}$ with pairing $\langle\cdot,\cdot\rangle$
defined by the sum~(\ref{eq:pairing}).  Let $F(y)=\langle\ln
y-1,y\rangle$ for $y>0$, so that the gradient $\nabla F(y)=\ln y$, and
$F$ is minimized at the counting measure $y_0(n)=1$.  The optimal
solutions have the form $y_\beta=e^{\beta x}$, and the values of
functions $\overline x(\lambda)$ and $\underline
x(\lambda)=-\overline{(-x)}(\lambda)$ are respectively
\[
\langle x,y_\beta\rangle=\sum_{n=1}^\infty x(n)\,e^{\beta x(n)}
\quad\mbox{and}\quad
\langle x,y_\beta\rangle=\sum_{n=1}^\infty x(n)\,e^{-\beta x(n)}\,,\qquad
\beta^{-1}>0
\]
In particular, for $x(n)=-n$, the first series converges to
$-e^{\beta}(e^{\beta}-1)^{-2}$, but the second diverges for any
$\beta^{-1}>0$.  Thus, $x(n)=-n$ is $F$-bounded above, but not below.
Observe also that $x(n)=-n$ is unbounded, because
$\|x\|_\infty:=\sup\{|\langle x,y\rangle|:\|y\|_1\leq1\}$ is infinite.
On the other hand, any constant sequence $x(n)=\alpha\in\bR$ is
bounded ($\|x\|_\infty=|\alpha|$), but it is not $F$-bounded above
or below.
\label{ex:f-bounded}
\end{example}

The criterion for element $x\in Y^\sharp$ to be $F$-bounded above follows from the optimality conditions, obtained in Proposition~\ref{lm:optimality-conditions}.

\begin{proposition}[Existence of solutions]
Solutions $y_\beta\in Y$ maximizing $x(y)=\langle x,y\rangle$ on sets
$\{y:F(y)\leq\lambda\}$ exist for all values $\lambda\in(\lambda_0,\overline\lambda)$ of a closed functional $F:Y\rightarrow\bR\cup\{\infty\}$, if there exists at least one number $\beta^{-1}>0$ such that subdifferential $\partial F^\ast(\beta x)$ is non-empty.
\label{lm:existence}
\end{proposition}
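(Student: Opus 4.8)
The plan is to reduce existence at a prescribed constraint level to non-emptiness of a single subdifferential of $F^\ast$, and then to propagate the one assumed instance of this non-emptiness along the whole ray $\{\beta x:\beta>0\}$ by exploiting convexity of $F^\ast$. By Proposition~\ref{lm:optimality-conditions}, a maximizer of $\langle x,y\rangle$ on $\{y:F(y)\le\lambda\}$ exists if and only if there is some $\beta^{-1}>0$ with $\beta^{-1}\in\partial\overline x(\lambda)$ together with an element $y_\beta\in\partial F^\ast(\beta x)$ satisfying $F(y_\beta)=\lambda$. Hence it suffices to show that, as $\beta$ ranges over $B:=\{\beta>0:\partial F^\ast(\beta x)\neq\varnothing\}$, the attained constraint values $F(y_\beta)$ exhaust the open interval $(\lambda_0,\overline\lambda)$.

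First I would record that $B$ is an interval. Since $F^\ast$ is always convex, $\dom F^\ast$ is convex, so $\{\beta\ge0:\beta x\in\dom F^\ast\}$ is an interval of the real line; the hypothesis places the given $\beta_0>0$ inside it, so this interval is non-degenerate. A closed convex functional is subdifferentiable at every point of the algebraic interior of its effective domain, whence $\partial F^\ast(\beta x)\neq\varnothing$ for every $\beta$ in the relative interior of that interval. Consequently $B=(\underline\beta,\overline\beta)$ with $0\le\underline\beta<\beta_0<\overline\beta\le\infty$. This is exactly where the hypothesis enters: a single admissible $\beta_0$ already forces $B$ to be a non-empty open interval rather than empty or a single point.

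Next I would track the constraint level along $B$. For $y_\beta\in\partial F^\ast(\beta x)$ the Fenchel equality gives $F(y_\beta)=\beta\langle x,y_\beta\rangle-F^\ast(\beta x)$, while the monotonicity of the subdifferential~(\ref{eq:monotonicity}) makes $\langle x,y_\beta\rangle$ non-decreasing in $\beta$; combined with the concavity of $\overline x$ from Proposition~\ref{lm:monotone} (which renders the correspondence $\lambda\leftrightarrow\beta^{-1}\in\partial\overline x(\lambda)$ monotone and continuous on the interior), this shows $\lambda=F(y_\beta)$ increases with $\beta$. It remains to identify the endpoints. As $\beta\downarrow\underline\beta$ the optimality relation $\beta x\in\partial F(y_\beta)$ degenerates towards $0\in\partial F(y_\beta)$, so $F(y_\beta)\to\inf F=\lambda_0$; as $\beta\uparrow\overline\beta$ the constraint becomes inactive and $\langle x,y_\beta\rangle\to\sup\{\langle x,y\rangle:y\in\dom F\}$, so $F(y_\beta)\to\overline\lambda$ by the defining relation~(\ref{eq:lambda_x}). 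Thus $\beta\mapsto F(y_\beta)$ maps $B$ onto $(\lambda_0,\overline\lambda)$.

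Finally, for an arbitrary $\lambda\in(\lambda_0,\overline\lambda)$ I would select $\beta\in B$ and $y_\beta\in\partial F^\ast(\beta x)$ with $F(y_\beta)=\lambda$; Proposition~\ref{lm:optimality-conditions} then certifies that $y_\beta$ maximizes $\langle x,y\rangle$ on $\{y:F(y)\le\lambda\}$, which is the claim. I expect the main obstacle to be precisely the endpoint matching and surjectivity of the third paragraph: aligning $\underline\beta$ with $\lambda_0$ and $\overline\beta$ with $\overline\lambda$, and ruling out gaps in the image of $\beta\mapsto F(y_\beta)$ when the subdifferentials $\partial F^\ast(\beta x)$ are genuinely set-valued, so that \emph{every} value in the open interval is actually attained and not merely approximated in the limit.
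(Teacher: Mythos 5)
Your overall strategy --- reduce existence at level $\lambda$ to the optimality conditions of Proposition~\ref{lm:optimality-conditions} and then propagate the single assumed non-empty subdifferential along the ray $\{\beta x:\beta>0\}$ --- is the same reduction the paper makes, but the paper's actual proof does not go through your second and third paragraphs at all: it observes that the one element $y_\beta\in\partial F^\ast(\beta x)$ is already a maximizer for its own level $\lambda=F(y_\beta)$, that the Young--Fenchel bound $\langle x,y\rangle\leq\beta^{-1}[F^\ast(\beta x)+\lambda]$ forces $\overline x(\lambda)<\infty$ for every $\lambda$, and then appeals to closedness of the sublevel sets to assert existence for all $\lambda\in(\lambda_0,\overline\lambda)$, with the solutions enumerated by $\beta^{-1}>0$.

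The elaboration you attempt in its place has two genuine gaps. First, the propagation of non-emptiness: you pass from ``$\beta$ lies in the relative interior of the one-dimensional section $\{\beta\geq0:\beta x\in\dom F^\ast\}$'' to ``$\partial F^\ast(\beta x)\neq\varnothing$'' by citing subdifferentiability on the algebraic interior of $\dom F^\ast$. These are different things: $\beta x$ can lie on the boundary of $\dom F^\ast$ in $X$ for \emph{every} $\beta>0$ (e.g.\ when $\dom F^\ast$ is a proper convex set containing the whole ray in its boundary), and restricting $F^\ast$ to the ray only yields a scalar subgradient of the one-variable convex function $\beta\mapsto F^\ast(\beta x)$, not an element $y\in Y$ satisfying the subgradient inequality against all $w\in X$. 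So the single hypothesis does not, by this route, give you a non-degenerate interval $B$ on which $\partial F^\ast(\beta x)\neq\varnothing$. Second, surjectivity: even granting $B$, monotonicity of $\beta\mapsto F(y_\beta)$ together with the asserted endpoint limits does not fill the open interval $(\lambda_0,\overline\lambda)$ --- a monotone map can jump, and since strict convexity of $F^\ast$ is \emph{not} assumed in this proposition, $\partial F^\ast(\beta x)$ can be genuinely set-valued and whole subintervals of $\lambda$ can be skipped. You flag this in your final sentence, but flagging it is not closing it, and it is exactly the content of the claim ``for all $\lambda$''. (A smaller point: invoking the concavity of $\overline x$ from Proposition~\ref{lm:monotone} is circular here, since that proposition presupposes the existence of the very maximizers you are trying to produce.)
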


\begin{proof}
The element $y_\beta\in\partial F^\ast(\beta x)$ maximizes $x(y)=\langle x,y\rangle$ on $\{y:F(y)\leq\lambda\}$ by Proposition~\ref{lm:optimality-conditions}, and if $\beta^{-1}>0$ and $x\neq0$, then $F(y_\beta)=\lambda\in(\lambda_0,\overline\lambda)$.  The optimal value $\overline x(\lambda)\in\bR$ is equal to
\[
\langle x,y_\beta\rangle=\beta^{-1}\left[F^\ast(\beta x)+F(y_\beta)\right]
\]
Note also that $F^\ast(\beta x)\in (\inf F^\ast,\sup F^\ast)$.  Because sets $\{y:F(y)\leq\lambda\}$ are closed for all $\lambda$ ($F$ is closed), the existence of a solution for one $\lambda$ implies the existence of solutions for all $\lambda$, and they are $y_\beta\in\partial F^\ast(\beta x)$ enumerated by different values $\beta^{-1}>0$.
\end{proof}

Thus, element $x\in Y^\sharp$ is $F$-bounded above if $\partial F^\ast(\beta x)$ is non-empty at least for one $\beta^{-1}>0$.  Geometrically, this means that $x$ can be absorbed into the convex set $C^\ast(\lambda^\ast):=\{w:F^\ast(w)\leq\lambda^\ast\}$ for some $\lambda^\ast\in(\inf F^\ast,\sup F^\ast)$.  If $x\in Y^\sharp$ is also $F$-bounded below, then $-x$ can be absorbed into $C^\ast(\lambda^\ast)$.  Therefore, if $x\in Y^\sharp$ is $F$-bounded only above or below, then the origin of a one-dimensional subspace $\bR x:=\{\beta x:\beta\in\bR\}$ is not on the interior of $\dom F^\ast$.  In fact, it is well-known that if sets $C(\lambda):=\{y:F(y)\leq\lambda\}$ are bounded, then $0\in\Int{\dom F^\ast}$ (see \cite{Asplund-Rockafellar69,Moreau67}).

\subsection{Monotonic properties}

\begin{proposition}[Monotonicity]
Optimal value functions $\overline x(\lambda)$, $\underline x(\lambda)$, $\overline
x^{-1}(\upsilon)$ and $\underline x^{-1}(\upsilon)$, defined by
equations~(\ref{eq:support}), (\ref{eq:anti-support}),
(\ref{eq:inverse-support}) and (\ref{eq:inverse-anti-support}) for a
closed $F:Y\rightarrow\bR\cup\{\infty\}$ and $x\neq0$, have the
following properties:
\begin{enumerate}
\item The mapping $\lambda\mapsto\beta^{-1}\in\partial\overline
  x(\lambda)$ is non-increasing, and $\upsilon\mapsto\beta\in\partial\overline
  x^{-1}(\upsilon)$ is non-decreasing.
\item If in addition $F^\ast$ is strictly convex, then these mappings
  are differentiable so that $\beta^{-1}=d\overline x(\lambda)/d\lambda$ and  $\beta=d\overline x^{-1}(\upsilon)/d\upsilon$.
\item $\overline x(\lambda)$ is concave and strictly increasing for
  $\lambda\in[\lambda_0,\overline\lambda]$.
\item $\underline x(\lambda)$ is convex and strictly decreasing for
  $\lambda\in[\lambda_0,\underline\lambda]$.
\item $\overline x^{-1}(\upsilon)$ is convex and strictly increasing
for $\upsilon\in[\overline\upsilon_0,\overline\upsilon]$.
\item $\underline x^{-1}(\upsilon)$ is convex and strictly decreasing
for $\upsilon\in[\underline\upsilon,\underline\upsilon_0]$.
\end{enumerate}
where $\overline\lambda$, $\underline\lambda$ are defined by equations~(\ref{eq:lambda_x}), and $\overline\upsilon_0$, $\underline\upsilon_0$ by equations~(\ref{eq:upsilon_0}).
\label{lm:monotone}
\end{proposition}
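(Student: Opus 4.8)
The plan is to prove everything for the upper functions $\overline{x}(\lambda)$ and $\overline{x}^{-1}(\upsilon)$ and to obtain parts~4 and~6 for free from the reflection identities $\underline{x}(\lambda)=-\overline{(-x)}(\lambda)$ and $\underline{x}^{-1}(\upsilon)=\overline{(-x)}^{-1}(-\upsilon)$ recorded after~(\ref{eq:inverse-anti-support}): composing with $x\mapsto-x$ (and with $\upsilon\mapsto-\upsilon$) turns ``concave and strictly increasing'' into ``convex and strictly decreasing'', with $\underline\lambda$ taking over the role of $\overline\lambda$ for the functional $-x$. The first substantive step is concavity of $\overline{x}$ (part~3). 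Since a support function coincides with that of the closed convex hull of its set, and $\cl\co\{y:F(y)\leq\lambda\}=\{y:F^{\ast\ast}(y)\leq\lambda\}$ (already used in the proof of Proposition~\ref{lm:optimality-conditions}), I may replace $F$ by the convex $F^{\ast\ast}$ in~(\ref{eq:support}). Then for $\lambda=\theta\lambda_1+(1-\theta)\lambda_2$ and feasible $y_i$, convexity gives $F^{\ast\ast}(\theta y_1+(1-\theta)y_2)\leq\lambda$, while $\langle x,\theta y_1+(1-\theta)y_2\rangle$ is the matching convex combination of $\langle x,y_i\rangle$; taking suprema yields $\overline{x}(\lambda)\geq\theta\overline{x}(\lambda_1)+(1-\theta)\overline{x}(\lambda_2)$. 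Equivalently, strong Lagrangian duality writes $\overline{x}(\lambda)=\inf_{\beta^{-1}>0}\beta^{-1}[\lambda+F^\ast(\beta x)]$, an infimum of affine functions of $\lambda$, hence concave.

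For strict monotonicity I appeal to Proposition~\ref{lm:optimality-conditions}: at every $\lambda\in(\lambda_0,\overline\lambda)$ a maximiser exists with multiplier $\beta^{-1}\in\partial\overline{x}(\lambda)$, and the closing remark there that $\beta^{-1}=0$ iff $F(y)\geq\overline\lambda$ shows $\beta^{-1}>0$ strictly on this interval. A concave function whose superdifferential contains a strictly positive element at each interior point is strictly increasing (from $\overline{x}(\lambda_2)-\overline{x}(\lambda_1)\geq\beta^{-1}(\lambda_2)(\lambda_2-\lambda_1)$), and concavity extends this to the closed interval $[\lambda_0,\overline\lambda]$ by continuity. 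Part~1 is then immediate, since the superdifferential of a concave function is a non-increasing set-valued map; the companion statement for $\upsilon\mapsto\beta\in\partial\overline{x}^{-1}(\upsilon)$ follows once $\overline{x}^{-1}$ is shown convex. Alternatively, part~1 drops out directly from monotonicity~(\ref{eq:monotonicity}) of $\partial F^\ast$ applied to $\beta_1 x$ and $\beta_2 x$ together with the fact that $\overline{x}$ is non-decreasing.

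For parts~5 and~2 I first show that on the stated ranges $\overline{x}^{-1}$ is genuinely the inverse of $\overline{x}$. Using strict monotonicity, any $y$ with $\langle x,y\rangle\geq\upsilon=\overline{x}(\lambda)$ must satisfy $F(y)\geq\lambda$ (otherwise it would be feasible at a strictly smaller level, contradicting strict increase), whereas the maximiser at $\lambda$ attains $F=\lambda$; hence $\overline{x}^{-1}(\overline{x}(\lambda))=\lambda$ for $\lambda\in[\lambda_0,\overline\lambda]$, i.e. $\overline{x}^{-1}=(\overline{x})^{-1}$ on $[\overline\upsilon_0,\overline\upsilon]$. The inverse of a concave strictly increasing function is convex and strictly increasing, which is part~5; part~6 follows by the $-x$ reflection. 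For differentiability (part~2), strict convexity of $F^\ast$ is equivalent to injectivity (strict monotonicity) of $\partial F^\ast$. If $\partial\overline{x}(\lambda)$ contained two multipliers $\beta_1^{-1}\neq\beta_2^{-1}$, the associated solutions $y_i\in\partial F^\ast(\beta_i x)$ would both attain $\overline{x}(\lambda)$, so $\langle\beta_1 x-\beta_2 x,\,y_1-y_2\rangle=(\beta_1-\beta_2)\langle x,y_1-y_2\rangle=0$, contradicting strict monotonicity unless $\beta_1=\beta_2$; thus the superdifferential is a singleton and $\overline{x}$ is differentiable with $\beta^{-1}=d\overline{x}(\lambda)/d\lambda$, and the same argument on $\overline{x}^{-1}$ gives $\beta=d\overline{x}^{-1}(\upsilon)/d\upsilon$.

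The routine convexity and inverse-function manipulations are standard; I expect the \emph{main obstacle} to lie at the endpoints. At $\lambda_0=\inf F$ the solution set $\partial F^\ast(0)$ may be non-unique, so $\overline{x}$ starts from the possibly non-trivial value $\overline\upsilon_0$ (explaining the interval $[\overline\upsilon_0,\overline\upsilon]$ rather than one beginning at $\underline\upsilon_0$), while at $\overline\lambda$ the multiplier degenerates to $\beta^{-1}=0$ and the constraint becomes inactive, so strict monotonicity can be claimed only up to, not beyond, $\overline\lambda$. Care is likewise needed wherever the supremum in~(\ref{eq:support}) is not attained (a solution may fail to exist, cf. Proposition~\ref{lm:existence}): there concavity persists in the extended-real sense, but both the inverse-function identity and the passage from an interior supergradient to strict monotonicity on the \emph{closed} interval must be justified by a semicontinuity/limiting argument rather than by direct appeal to a maximiser.
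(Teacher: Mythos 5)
Your proof is correct, and it reaches the same conclusions as the paper while reorganising the two places where the real work happens. The author derives concavity of $\overline x(\lambda)$ (and convexity of $\overline x^{-1}$) from the monotonicity of the multiplier maps $\lambda\mapsto\beta^{-1}$ and $\upsilon\mapsto\beta$, which in turn rests on the existence of maximizers $y_\beta\in\partial F^\ast(\beta x)$ from Proposition~\ref{lm:optimality-conditions} and the monotone-operator inequality~(\ref{eq:monotonicity}); you instead prove concavity directly, by passing to $F^{\ast\ast}$ via $\cl\co\{y:F(y)\leq\lambda\}=\{y:F^{\ast\ast}(y)\leq\lambda\}$ and taking convex combinations of feasible points (or, equivalently, writing $\overline x$ as an infimum of affine functions of $\lambda$ by Lagrangian duality). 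Your route is the more robust one: it does not presuppose that the suprema are attained or that multipliers exist, whereas deducing concavity from a monotone selection of supergradients is somewhat circular (a nonempty global superdifferential at every point already encodes concavity). Conversely, the paper's route keeps everything expressed through $\beta$ and $\partial F^\ast$, which is the machinery reused in Theorem~\ref{th:absolute}. For part~5 you take a genuinely different path — establishing $\overline x^{-1}=(\overline x)^{-1}$ on $[\overline\upsilon_0,\overline\upsilon]$ and invoking the convexity of the inverse of a concave increasing function — while the paper argues directly from $\beta\geq0$ and the non-decrease of $\upsilon\mapsto\beta$; both are sound, and your extra inverse-function lemma is correctly justified. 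Parts~1, 2, 4 and 6 match the paper's arguments (monotonicity of $\partial F^\ast$, injectivity under strict convexity of $F^\ast$, and the reflections $\underline x=-\overline{(-x)}$, $\underline x^{-1}(\upsilon)=\overline{(-x)}^{-1}(-\upsilon)$). Your closing caveats about endpoints and non-attainment of the supremum identify genuine gaps that the paper's own proof leaves implicit rather than defects of your argument.
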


\begin{proof}
\begin{enumerate}
\item Let $y_{\beta_1}$, $y_{\beta_2}$ be maximizers of linear
  functional $x(y)=\langle x,y\rangle$ on sublevel sets $\{y:F(y)\leq\lambda\}$
  with constraints $\lambda_1$, $\lambda_2$ respectively, and let $\upsilon_1=\langle x,y_{\beta_1}\rangle$ and $\upsilon_2=\langle x,y_{\beta_2}\rangle$ denote the corresponding optimal values.
  Clearly, $\lambda_1\leq\lambda_2$ implies $\upsilon_1\leq\upsilon_2$
  by the inclusion
  $\{y:F(y)\leq\lambda_1\}\subseteq\{y:F(y)\leq\lambda_2\}$, so that
  the optimal value function $\overline x(\lambda)=\langle
  x,y_\beta\rangle$ is non-decreasing.  Using condition
  $y_\beta\in\partial F^\ast(\beta x)$ of
  Proposition~\ref{lm:optimality-conditions} and monotonicity
  condition~(\ref{eq:monotonicity}) for convex $F^\ast$, we have
\[
\langle\beta_2 x-\beta_1 x,y_{\beta_2}-y_{\beta_1}\rangle
=(\beta_2-\beta_1)\langle x,y_{\beta_2}-y_{\beta_1}\rangle\geq0
\]
Therefore, $\upsilon_1\leq\upsilon_2$ implies $\beta_1\leq\beta_2$.
This proves that $\lambda\mapsto\beta^{-1}$ is non-increasing, and
$\upsilon\mapsto\beta$ is non-decreasing.

\item Optimality condition $y_\beta\in\partial F^\ast(\beta x)$ is
  equivalent to $\beta x\in\partial F(y_\beta)$ by
  property~(\ref{eq:subdifferentials}), and together with condition
  $F(y_\beta)=\lambda$ or $\langle x,y_\beta\rangle=\upsilon$ it
  implies that different $\beta_1<\beta_2$ can correspond to the same
  $\lambda$ or $\upsilon$ if and only if $\partial F(y_\beta)$
  includes both $\beta_1x$ and $\beta_2x$.  This implies that $F^\ast$
  is not strictly convex on $[\beta_1x,\beta_2x]\subseteq\partial
  F(y_\beta)$.  Dually, if $F^\ast$ is strictly convex, then
  $\beta_1\neq\beta_2$ implies $\lambda_1\neq\lambda_2$ and
  $\upsilon_1\neq\upsilon_2$, so that
  $\{\beta^{-1}\}=\partial\overline x(\lambda)$ and
  $\{\beta\}=\partial\overline x^{-1}(\upsilon)$.  In this case,
  monotone functions $\overline x(\lambda)$ and $\overline
  x^{-1}(\upsilon)$ are differentiable.

\item Function $\overline x(\lambda)$ is strictly increasing on
  $\lambda\in[\lambda_0,\overline\lambda]$, because $\partial\overline
  x(\lambda)\ni\beta^{-1}\geq0$ and $\beta^{-1}=0$ if and only if
  $\lambda\geq \overline\lambda$ (Proposition~\ref{lm:optimality-conditions}).
  The mapping $\lambda\mapsto\beta^{-1}\in\partial\overline
  x(\lambda)$ is non-increasing, and therefore $\overline x(\lambda)$
  is concave.

\item By the same reasoning as above, function
  $\overline{(-x)}(\lambda)$ is concave and strictly increasing for
  $\lambda\in[\lambda_0,\underline\lambda]$.  Thus, $\underline
  x(\lambda)=-\overline{(-x)}(\lambda)$ is convex and strictly
  decreasing.

\item Function $\overline x^{-1}(\upsilon)$ is strictly increasing for all
  $\upsilon\in[\overline\upsilon_0,\overline\upsilon]$, because $\partial\overline
  x^{-1}(\upsilon)\ni\beta\geq0$, and $\beta=0$ if and only if
  $\upsilon=\langle x,y_0\rangle\leq\overline\upsilon_0$ for any
  $y_0\in\partial F^\ast(0)$ ($\lambda_0:=\inf F=F(y_0)$).  Moreover, the mapping
  $\upsilon\mapsto\beta\in\partial\overline x^{-1}(\upsilon)$ is
  non-decreasing, and therefore $\overline x^{-1}(\upsilon)$ is
  convex.

\item Function $\underline x^{-1}(\upsilon)$ is the inverse of convex
  and strictly decreasing function $\underline x(\lambda)$.  Thus,
  $\underline x^{-1}(\upsilon)$ is also convex and strictly decreasing
  for $\upsilon\in[\underline\upsilon,\underline\upsilon_0]$.
\end{enumerate}
\end{proof}

We now use the facts that $X$ is ordered by a pointed convex cone $X_+$, generating $X=X_+-X_+$, and that $Y$ is ordered by the dual cone:
$Y_+:=\{y\in Y:\langle x,y\rangle\geq0\,,\ \forall\,x\geq0\}$.  For example, this is the case when $X$ is a function space with the pointwise order, or if $X$ is the space of operators on a Hilbert space  with $x^\ast x\in X_+$.

\begin{proposition}[Zero solution]
Let $X$ be ordered by a generating pointed cone $X_+$, and let $\{y_\beta\}_x$ be the family of all elements maximizing linear functional $x(y)=\langle x,y\rangle$ on sets $\{y:F(y)\leq\lambda\}$ for all values $\lambda$ of a closed functional $F:Y\rightarrow\bR\cup\{\infty\}$.  If all $y_\beta\in\{y_\beta\}_x$ are non-negative and $y_\beta=0$ for some $\lambda$, then
\[
x=0\quad\mbox{or}\quad F(0)=\lambda_0\quad\mbox{or}\quad F(0)=\overline\lambda
\]
where $\lambda_0:=\inf F$, and $\overline\lambda$ is such that $\overline x(\overline\lambda)=\sup\{\langle x,y\rangle:y\in\dom F\}$
\label{pr:zero}
\end{proposition}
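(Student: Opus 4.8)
The plan is to read off the characterization of a zero maximizer from the optimality conditions and then sort it into the three alternatives according to the size of the Lagrange multiplier. By Proposition~\ref{lm:optimality-conditions}, the statement that $y_\beta=0$ maximizes $\langle x,y\rangle$ on $\{y:F(y)\leq\lambda\}$ is equivalent to the pair of conditions $0\in\partial F^\ast(\beta x)$ and $F(0)=\lambda$, with $\beta^{-1}\in\partial\overline x(\lambda)$ and $\beta^{-1}\geq0$. Using the duality (\ref{eq:subdifferentials}) I rewrite the first condition as $\beta x\in\partial F(0)$, which is the single relation I will exploit. Since $\lambda=F(0)$ is forced, the whole claim concerns the one value $\lambda=F(0)$, and the three alternatives will correspond to $\beta=0$, to $0<\beta<\infty$, and to $\beta^{-1}=0$.

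First the two extreme multipliers, which are routine. If $\beta=0$, then $\beta x=0\in\partial F(0)$, and since $0\in\partial F(y)$ forces $F(y)=\inf F$ (noted after (\ref{eq:subdifferentials})), I obtain $F(0)=\lambda_0$. If instead $\beta^{-1}=0$, then by the closing remark in the proof of Proposition~\ref{lm:optimality-conditions} the constraint is inactive, $\lambda\geq\overline\lambda$, and $0$ solves the unconstrained problem on $\dom F$; because $\langle x,0\rangle=0=\sup\{\langle x,y\rangle:y\in\dom F\}$, this identifies $F(0)$ with the level $\overline\lambda$ of (\ref{eq:lambda_x}). These yield the second and third alternatives and need only subdifferential bookkeeping.

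The remaining, genuinely delicate, case is the interior one, $0<\beta<\infty$, where $\lambda=F(0)\in(\lambda_0,\overline\lambda)$, and where I must conclude $x=0$. The idea is to play the non-negativity of the whole family $\{y_\beta\}_x$ against the pointedness of $Y_+$. By part~3 of Proposition~\ref{lm:monotone}, $\overline x$ is strictly increasing on $[\lambda_0,\overline\lambda]$, so on either side of $\lambda=F(0)$ there are maximizers $y_{\beta'}\geq0$ with $\langle x,y_{\beta'}\rangle<0=\langle x,0\rangle$ (for $\lambda'<\lambda$) and $y_{\beta''}\geq0$ with $\langle x,y_{\beta''}\rangle>0$ (for $\lambda''>\lambda$). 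Since $0\in\partial F^\ast(\beta x)$ and all of these solutions lie in $Y_+$, the solution family reaches the apex $0$ of the cone and leaves it while staying non-negative on both sides, with $\langle x,y_\beta\rangle$ strictly changing sign. I then want to argue that, because $Y_+$ is pointed, $Y_+\cap(-Y_+)=\{0\}$, a family driven by the monotone operator $\partial F^\ast$ cannot approach the apex from inside the cone and depart again inside the cone while $\langle x,y_\beta\rangle$ strictly increases through $0$, unless the driving element $x$ is null; hence $x=0$.

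I expect this last step to be the main obstacle, for two reasons. First, $\partial F^\ast$ is only a possibly multivalued monotone operator, so the behaviour of the solutions near the apex must be controlled through the subgradient inequalities coming from $\beta x\in\partial F(0)$ tested against non-negative elements, together with a limiting argument as $\beta'\to\beta$, rather than by any smooth tangent-vector reasoning. Second, pointedness has to be invoked at exactly the right place: the contradiction should come from the directions of approach and departure both lying in the tangent cone of $Y_+$ at its apex, and one must be careful that this tangent cone is all of $Y_+$, so the argument genuinely requires the strict sign change of $\langle x,y_\beta\rangle$ furnished by Proposition~\ref{lm:monotone} and not merely the monotonicity~(\ref{eq:monotonicity}). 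I would therefore devote the proof to making this apex argument precise, possibly falling back on single-valuedness of $\partial F^\ast$ where the cone fails to be sharp enough, and treat the two extreme-multiplier cases as immediate.
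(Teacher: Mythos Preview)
Your case split on the multiplier $\beta$ is reasonable, and in the interior case $\lambda=F(0)\in(\lambda_0,\overline\lambda)$ you reach exactly the same configuration as the paper: optimal non-negative elements $y_1,y_2\in Y_+$ with $\langle x,y_1\rangle<0<\langle x,y_2\rangle$, obtained from strict monotonicity of $\overline x(\lambda)$. So far so good.

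The gap is in what follows. Your ``apex of the cone'' argument is not a proof but a programme: you invoke pointedness of $Y_+$ (a hypothesis not actually stated---the proposition assumes $X_+$ is pointed and generating, which says nothing direct about $Y_+$ being pointed), tangent cones, and the monotonicity of $\partial F^\ast$, and you yourself flag that this ``last step'' is the main obstacle and might require extra assumptions such as single-valuedness. None of that is needed, and the route you sketch does not obviously close.

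What you are missing is the one hypothesis you never use: that $X_+$ is \emph{generating}, i.e.\ $X=X_+-X_+$. The paper's argument is purely order-theoretic and avoids all the geometric apparatus you propose. Write $x=x_+-x_-$ with $x_+,x_-\in X_+$. Then $\langle x_+-x_-,y_1\rangle<0$ and $\langle x_+-x_-,y_2\rangle>0$ with $y_1,y_2\geq0$ force simultaneously $x_+<x_-$ and $x_+>x_-$, which is the desired contradiction giving $x=0$. (One may reasonably ask how a single test vector $y_i\geq0$ yields an order relation between $x_+$ and $x_-$; the paper is terse here, but the point is that this is where the argument lives, not in tangent cones or monotone operators.) In short: drop the apex/tangent-cone picture, and instead exploit the decomposition $x=x_+-x_-$ that the generating-cone hypothesis hands you.
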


\begin{proof}
Assume the opposite: $x\neq0$ and $\lambda_0<F(0)<\overline\lambda$.  Then
function $\overline x(\lambda)=\langle x,y_\beta\rangle$ is strictly
increasing (Proposition~\ref{lm:monotone}), and sets $\{y:F(y)<F(0)\}$
and $\{y:F(0)<F(y)\}$ are non-empty ($F$ is closed).  Thus, there
exist solutions $y_1$ and $y_2$ such that
\[
F(y_1)<F(0)<F(y_2)\quad\mbox{and}\quad
\langle x,y_1\rangle<0<\langle x,y_2\rangle
\]
Using decomposition $x=x_+-x_-$, $x_+$, $x_-\in X_+$ and $y_1$,
$y_2\in Y_+$, we conclude that
\[
\langle x_+-x_-,y_1\rangle<0<\langle x_+-x_-,y_2\rangle
\quad\Rightarrow\quad
x_+>x_-\quad\mbox{and}\quad x_+<x_-
\]
This implies $x=0$, which is a contradiction.
\end{proof}

\section{Optimal measures}
\label{sec:main}

Our interest is in the support set of optimal positive measures
maximizing linear functional $x(y)=\langle x,y\rangle$ on closed sets
$\{y:F(y)\leq\lambda\}$.  First, we shall prove the main theorem about mutual absolute continuity within families of optimal measures.  Then we shall discuss the underlying property of an information functional.  In the end of this section, we formulate a corollary stating that support of a utility function or operator is contained in the support of optimal measures.

\subsection{Mutual absolute continuity of optimal measures}

Let $X$ be a $\ast$-algebra with a unit element $1\in X$.  Recall that $X$ can be associated with the algebra $\mathcal{R}(\Omega)$ of subsets of $\Omega$ in the classical (commutative) setting, or with the algebra $\mathcal{R}(\mathcal{H})$ of operators on a Hilbert space $\mathcal{H}$ in the non-classical (non-commutative) setting.  A subalgebra $\mathcal{R}(E)$ of subset $E\subset\Omega$ or subspace $E\subset\mathcal{H}$ corresponds in each case to a subalgebra $M\subset X$, and we shall use notation $y(M)=0$ to denote measures that are zero on subset or subspace $E$.  The dual of subalgebra $M\subset X$ is the factor space $Y/M^\bot$ of equivalence classes $[y]:=\{z\in Y:y-z\in M^\bot\}$ generated by the annihilator $M^\bot:=\{y\in Y:\langle x,y\rangle=0\,,\,\forall\,x\in M\}$.  Thus, the elements of $Y/M^\bot$ correspond to measures that are equivalent on $M$, and $M^\bot=[0]\in Y/M^\bot$ is the subspace of measures $y(M)=0$.

We shall define the restriction of functions or operators $x$ to subset or subspace $E$ as their localization $\Pi_M x$, where $\Pi_M:X\rightarrow M$ is a positive `super' operator (i.e. a linear operator acting on the algebra of functions or operators) such that $\Pi_M(X)=M$ and $\Pi_M(x^\ast x)\geq 0$.  Note that when $X$ is a commutative algebra, one can always define $\Pi_M$ with the projection property $\Pi_M^2=\Pi_M$, leaving $M$ invariant.  In the non-commutative case, a projection of $X$ onto $M$ exists if and only if $M$ is invariant under the action of a modular automorphism group (see \cite{Takesaki72} for details).  More specifically, the positive operator $\Pi_M$ satisfies in this case condition $\Pi_M(wx)=w\Pi_M(x)$ for all $w\in M$ and all $x\in X$.  If in addition $\Pi_M(1)=1$, then $\Pi_M$ is the non-commutative generalization of conditional expectation (e.g. see \cite{Petz88:_cond}).  Clearly, only subalgebras $M\subset X$ with projections have statistical or physical meaning.  Note that one can always construct a completely positive linear operator $\Pi_M$, which becomes a projection onto $M$, if $M$ has the above mentioned property of modular automorphism invariance \cite{Accardi-Cecchini82}.  We shall refer to such $\Pi_M$ as {\em localization} onto subalgebra $M$.  The restriction of $F^\ast:X\rightarrow\bR\cup\{\infty\}$ to $M$ is given by $F^\ast(\Pi_Mx)$, and the dual of $F^\ast(\Pi_Mx)$ is defined on $Y/M^\bot$ as $F^{\ast\ast}([y]):=\inf\{F^{\ast\ast}(y):y\in[y]\}$.

\begin{theorem}[Mutual absolute continuity]
Let $X$ be ordered by a generating pointed cone $X_+$, and let $\{y_\beta\}_x$ be the family of all elements maximizing linear functional $x(y)=\langle x,y\rangle$ on sets $\{y:F(y)\leq\lambda\}$ for all values $\lambda$ of a closed functional $F:Y\rightarrow\bR\cup\{\infty\}$.  If all $y_\beta\in\{y_\beta\}_x$ are non-negative and $F^\ast(x):=\sup\{\langle x,y\rangle-F(y)\}$ is strictly convex, then:
\begin{enumerate}
\item There is a subfamily $\{y^\circ_\beta\}_x\subseteq\{y_\beta\}_x$
  containing $y^\circ_\beta$ for each $\lambda\in(\lambda_0,\overline\lambda)$, and $y^\circ_\beta$ correspond to mutually
  absolutely continuous positive measures.
\item If there exists element $y_0$ (resp. $\delta_x$) in
  $\{y_\beta\}_x$ such that $\inf F=F(y_0)$ (resp. $\sup\{\langle
  x,y\rangle:y\in\dom F\}=\langle x,\delta_x\rangle$), then $y_0$
  (resp. $\delta_x$) is absolutely continuous w.r.t. all $y^\circ_\beta$.
\item If in addition $F^{\ast\ast}$ is strictly convex, then
  $\{y^\circ_\beta\}_x=\{y_\beta\}_x\setminus\{y_0,\delta_x\}$.
\end{enumerate}
\label{th:absolute}
\end{theorem}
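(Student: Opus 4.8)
The plan is to parametrize the family by the Lagrange multiplier and then reduce mutual absolute continuity to the already-proven Zero solution proposition via the localization machinery $\Pi_M$ introduced before the statement. By Proposition~\ref{lm:optimality-conditions} every optimal $y_\beta$ satisfies $y_\beta\in\partial F^\ast(\beta x)$ with $\beta^{-1}>0$ for $\lambda\in(\lambda_0,\overline\lambda)$; strict convexity of $F^\ast$ makes $\partial F$ single-valued, so distinct parameters $\beta_1 x\neq\beta_2 x$ have disjoint subdifferentials and the assignment $\beta\mapsto y_\beta$ is injective. I would use this to select, for each $\lambda\in(\lambda_0,\overline\lambda)$, a representative $y^\circ_\beta$, which defines the subfamily $\{y^\circ_\beta\}_x$ claimed in part~1.

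For part~1 I would argue by contradiction, using that for positive measures $y_{\beta_2}\ll y_{\beta_1}$ fails exactly when there is a subalgebra $M\subset X$ (subset or subspace $E$) with $y_{\beta_1}\in M^\bot$ but $y_{\beta_2}\notin M^\bot$. The idea is then to localize the whole problem onto $M$: apply $\Pi_M$ to $x$ and restrict $F^\ast$ to $M$, writing $F_M$ for the dual of $F^\ast|_M$ on $Y/M^\bot$. The key point is that $\Pi_M$ is linear and compatible with the pairing ($\Pi_M(wx)=w\,\Pi_M x$), so the global condition $\beta x\in\partial F(y_\beta)$ descends to $\beta\,\Pi_M x\in\partial F_M([y_\beta])$ with the \emph{same} multiplier $\beta$, while the restriction of a strictly convex $F^\ast$ to the subspace $M$ stays strictly convex. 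Hence $\{[y_\beta]\}$ is precisely the optimal family of the localized problem, parametrized by the same $\beta$. In this localized family the class $[y_{\beta_1}]$ is the \emph{zero} element (since $y_{\beta_1}\in M^\bot=[0]$), yet $\beta_1$ is interior to $(0,\infty)$, so the localized constraint value $F_M(0)$ lies strictly between $\inf F_M$ and $\overline\lambda_M$. Applying Proposition~\ref{pr:zero} to the localized problem then forces $\Pi_M x=0$, or $F_M(0)=\inf F_M$, or $F_M(0)=\overline\lambda_M$ — contradicting interiority. The degenerate case $\Pi_M x=0$ is ruled out separately: the localized objective is then constant, so strict convexity of the restricted $F^\ast$ makes the localized minimizer unique and all $[y_\beta]$ coincide, contradicting $[y_{\beta_1}]=0\neq[y_{\beta_2}]$. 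This yields $y_{\beta_2}\ll y_{\beta_1}$, and by symmetry mutual absolute continuity across the whole open interval, so the $y^\circ_\beta$ share one support $S$.

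For part~2 I would rerun the localization at the boundary values $\beta=0$ (the minimizer $y_0$) and $\beta=\infty$ (the concentrator $\delta_x$). These are exactly the cases $\inf F=F(y_0)$ and $\overline x(\overline\lambda)=\langle x,\delta_x\rangle$ in which Proposition~\ref{pr:zero} \emph{permits} the localized solution to vanish. The contradiction of part~1 therefore survives only in one direction, giving the one-sided containments $\mathrm{supp}\,y_0\subseteq S$ and $\mathrm{supp}\,\delta_x\subseteq S$, i.e. $y_0$ and $\delta_x$ are absolutely continuous with respect to every $y^\circ_\beta$. I would obtain these by a limiting argument along the curve $\beta\mapsto y_\beta$: if every interior $y^\circ_\beta$ vanishes on $E$, then so must the endpoints, because the face of $Y_+$ cut out by $S$ is closed and contains the limits of the curve. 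Finally, for part~3, strict convexity of $F^{\ast\ast}$ gives uniqueness of the primal maximizer for each $\lambda$ and closes the gap $F\neq F^{\ast\ast}$, so the family is an unbranched curve whose support can fall below $S$ only at the two ends; hence $\{y^\circ_\beta\}_x=\{y_\beta\}_x\setminus\{y_0,\delta_x\}$.

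The step I expect to be the main obstacle is the localization itself: verifying that the restriction of a global optimum is genuinely optimal for the localized problem \emph{with the same multiplier} $\beta$, that strict convexity of $F^\ast$ is inherited by $F^\ast|_M$, and — in the non-commutative setting — that a projection $\Pi_M$ onto $M$ exists at all, which (as recalled before the theorem) requires $M$ to be invariant under the modular automorphism group. The degenerate $\Pi_M x=0$ case and the correct bookkeeping of the possibly infinite boundary values $\overline\lambda,\underline\lambda$ in the localized problem are the remaining technical points.
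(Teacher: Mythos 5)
Your overall strategy coincides with the paper's: localize onto a subalgebra $M$ via $\Pi_M$, pass to the quotient $Y/M^\bot$, apply Proposition~\ref{pr:zero} to the class $[0]$, and use strict convexity of $F^\ast$ to eliminate the cases $F^{\ast\ast}([0])=\lambda_0$ and $F^{\ast\ast}([0])=\overline\lambda_M$ (the paper does this by noting that either case would force $\partial F^{\ast\ast}([0])$ to contain a whole interval $[0,\beta\Pi_Mx]$ or $[\beta\Pi_Mx,\infty)$, contradicting single-valuedness of $\partial F^{\ast\ast}$). Up to that point your argument matches the paper's.

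The gap is in your treatment of the surviving case $\Pi_Mx=0$, and it is not a technicality: it is exactly where parts 1 and 3 of the theorem diverge. You claim $\Pi_Mx=0$ yields a contradiction because ``strict convexity of the restricted $F^\ast$ makes the localized minimizer unique and all $[y_\beta]$ coincide.'' But strict convexity of $F^\ast$ only makes the \emph{inverse} map $y\mapsto\partial F(y)$ single-valued; it does not make $\partial F^\ast(0)$ (the set of minimizers of $F^{\ast\ast}$) a singleton. When $F^{\ast\ast}$ is not strictly convex, $\partial F^\ast(\beta x)$ may contain several maximizers for the same $\lambda$, some vanishing on $M$ and some not, so no contradiction is available --- and indeed the theorem does not assert that \emph{all} pairs of solutions are mutually absolutely continuous, only that a suitable \emph{selection} is. Consequently your opening move --- choose an arbitrary representative $y^\circ_\beta$ for each $\lambda$ and then prove any two are mutually absolutely continuous --- cannot work. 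The paper instead uses $\Pi_Mx=0$ \emph{constructively}: it implies $\beta\Pi_Mx=0$ for every $\beta$, hence $[0]\in\partial F^\ast(\beta\Pi_Mx)$ for all $\beta$, so for \emph{every} $\lambda\in(\lambda_0,\overline\lambda)$ there exists some solution vanishing on $M$; the subfamily is then defined by selecting, for each $\lambda$, the solution vanishing on the maximal null subalgebra $M=\sup\{N\subset X:\exists\,y^\circ_\beta,\ y^\circ_\beta(N)=0\}$. This selection step is what your proof is missing. (Secondarily, your part 2 replaces the paper's direct observation --- $\Pi_Mx=0$ forces $\beta\Pi_Mx=0$ at the endpoints $\beta=0$ and $\beta^{-1}=0$ as well --- by a limiting argument along $\beta\mapsto y_\beta$ whose continuity you have not established.)
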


\begin{proof}
Let $y_\beta$ be a solution for some $\lambda\in(\lambda_0,\overline\lambda)$.  Then $y_\beta\in\partial F^\ast(\beta x)$,
$0<\beta^{-1}<\infty$ (Proposition~\ref{lm:optimality-conditions}).
Let $\Pi_M:X\rightarrow M$ be a localization operator onto subalgebra $M\subset X$ (i.e. a completely positive linear operator that acts as a projection onto some subalgebras \cite{Accardi-Cecchini82}).  Then $[y_\beta]\in\partial F^\ast(\beta \Pi_Mx)\subset Y/M^\bot$.  Assume that the corresponding measure $y_\beta(M)=0$.  Then $y_\beta\in[0]\in Y/M^\bot$, where $[0]=M^\bot$, and because $[y_\beta]\geq0$ ($y_\beta\geq0$ and $\Pi_M$ is positive), $[y_\beta]=[0]$ implies by Proposition~\ref{pr:zero}
\[
\Pi_Mx=0\quad\mbox{or}\quad
F^{\ast\ast}([0])=\lambda_0\quad\mbox{or}\quad F^{\ast\ast}([0])=\overline\lambda_M
\]
where $\lambda_0:=\inf F$, and
$\overline\lambda_M\leq\overline\lambda$ is such that $\overline{\Pi_M
  x}(\overline\lambda_M)=\sup\{\langle\Pi_Mx,[y]\rangle:[y]\in\dom
F^{\ast\ast}\}$.  Observe that non-empty $\partial F^{\ast\ast}([0])$
is a singleton set, because $F^\ast$ (and hence $F^\ast(\Pi_Mx)$) is
strictly convex.  Therefore, the last two cases above are false,
because otherwise $\partial F^{\ast\ast}([0])$ would contain the
intervals $[0,\beta \Pi_Mx]$ or $[\beta \Pi_Mx,\infty)$,
  $0<\beta<\infty$.  Thus, $\Pi_Mx=0$ is the only true case.  But then
  $\beta \Pi_Mx=0$ for all $\beta$, and therefore
\[
[0]\in\partial F^\ast(\beta \Pi_Mx)\,,\quad\forall\,\beta\in\bR
\]
In other words, for each $\lambda\in(\lambda_0,\overline\lambda)$, there
is a solution $y_\beta\in[0]$, such that the corresponding measure
$y_\beta(M)=0$.

These measures are not mutually absolutely continuous only if there
exists solution $y^\circ_\beta$ for some
$\lambda\in(\lambda_0,\overline\lambda)$ such that the corresponding
measure $y^\circ_\beta(N)=0$ on some larger subalgebra $N\supset M$.
The subfamily $\{y^\circ_\beta\}_x\subseteq\{y_\beta\}_x$
corresponding to mutually absolutely continuous measures for all
$\lambda\in(\lambda_0,\overline\lambda)$ is constructed by taking
\[
M=\sup\{N\subset X:\exists\,y^\circ_\beta\in\{y_\beta\}_x,\ y^\circ_\beta(N)=0\}
\]
where supremum is with respect to ordering by inclusion.

If $\lambda_0:=\inf F$ (resp. $\overline\upsilon:=\sup\{\langle x,y\rangle:y\in\dom F\}$) is
attained at some $y_0$ (resp. $\delta_x$), then they correspond to
elements of $\{y_\beta\}_x$ with $\beta=0$ (resp. $\beta^{-1}=0$).
The corresponding measures $y_0$ (resp. $\delta_x$) are absolutely
continuous with respect to all $y_\beta^\circ$, because $\Pi_Mx=0$
implies $\beta \Pi_Mx=0$ for all $\beta$.

If $F^{\ast\ast}$ is strictly convex, then $\partial F^\ast(\beta x)$
contains a unique element $y^\circ_\beta$ for each $\beta^{-1}>0$, and
$\{y^\circ_\beta\}_x=\{y_\beta\}_x\setminus\{y_0,\delta_x\}$.
\end{proof}

\begin{remark}
The key condition in the proof of Theorem~\ref{th:absolute} is that
the non-empty subdifferentials $\partial F(y_\beta)$ are singleton
sets, which follows immediately from injectivity of $\partial F^\ast$
or strict convexity of $F^\ast$.  If $y_\beta\in\Int{\dom
  F^{\ast\ast}}$, then $F^{\ast\ast}$ is continuous at $y_\beta$
(e.g. see \cite{Moreau67} or \cite{Rockafellar74}, Theorem~8), and
$\partial F^{\ast\ast}(y_\beta)$ is a singleton if and only if
$F^{\ast\ast}$ is G\^{a}teaux differentiable at $y_\beta$ (e.g. see
\cite{Tikhomirov90:_convex}, Chapter~2, Section~4.1).  Injectivity of
$\partial F^\ast$ can also be based on its algebraic properties.  In
particular, if $\partial F^\ast$ is a group homomorphism, then it is
injective if and only if its kernel is a singleton set.  This will be
discussed in the end of Example~\ref{ex:relative-information} (see
also \cite{Belavkin10:_dis10}).
\end{remark}

Optimal probability measures are obtained by normalization
$p_\beta:=y_\beta/\|y_\beta\|_1$ of optimal positive measures
$y_\beta$.  This corresponds to additional equality $\|y\|_1=\langle
1,y\rangle=1$ and inequality $y\geq0$ constraints in the optimal value
functions~(\ref{eq:support})--(\ref{eq:inverse-anti-support}) or
simply to a restriction of functional $F$ to the statistical manifold
$\mathcal{P}:=\{y:y\geq0,\ \langle 1,y\rangle=1\}$, which is the base
of positive cone $Y_+$.  Optimal probability measures are solutions to
generalized variational problems~(\ref{eq:max-i}) or (\ref{eq:min-i})
with constraints on information distance $I(p,q)$ or resource $F(p)$.
All mutually absolutely continuous measures
$y^\circ_\beta\in\{y_\beta\}_x$ belong to the same subspace
$M^\bot\subset Y$, and the corresponding probability measures
$p^\circ_\beta$ belong to the interior of the base $\cP\cap M^\bot$ of
subcone $M^\bot_+\subset Y_+$.  In the classical (commutative) case,
$\cP$ is a simplex, and $\cP\cap M^\bot$ is its facet, which is itself
a simplex.

\begin{remark}
If the effective domain $\dom F\subset Y$ of functional
$F:Y\rightarrow\bR\cup\{\infty\}$ is the positive cone $Y_+$, then
property $y_\beta(M)=0$ on subalgebra $M\subset X$ implies $y_\beta$
is on the boundary of $Y_+=\dom F$.  In this case, mutual absolute
continuity of measures $y_\beta\in\partial F^\ast(\beta x)$ can be
proved using the fact that the image of injective subdifferential
mapping $\partial F^\ast:X\rightarrow 2^Y$ is interior of $\dom F$
(e.g. see \cite{Alesker98}, Lemma~4).  Therefore, such subgradients
$y_\beta\in\partial F^\ast(\beta x)$ cannot be on the boundary of
$Y_+=\dom F$.
\end{remark}

The existence of optimal and mutually absolutely continuous
probability measures for all constraints $F(y)\leq\lambda$ on an
information resource is used in the next section to study optimality
of deterministic and non-deterministic Markov transition kernels.
Theorem~\ref{th:absolute} shows that this is related to strict
convexity of $F^\ast$ (or injectivity of $\partial F^\ast$), and
therefore we now discuss this property with some examples.

\subsection{Information and separation of variational problems for measures}
\label{sec:strict-convexity}

If $F^\ast$ is not strictly convex (or $\partial F^\ast$ is not
injective), then $\partial F(y_\beta)$ may contain different elements
$x$, $w\in Y^\sharp$.  Recall that linear functionals $x\in Y^\sharp$
are understood in classical optimization theory as objective
(e.g. utility) functions $x:\Omega\rightarrow\bR$ representing a
preference relation $\lesssim$ on $\Omega\equiv\ext\cP$.  Thus,
$y_\beta$ may maximize both $x(y)=\langle x,y\rangle$ and
$w(y)=\langle w,y\rangle$ on $\{y:F(y)\leq\lambda\}$, which means that
$y_\beta$ solves different optimization problems.  Indeed, value
$\lambda=F(y_\beta)$ corresponds to equal optimal values $\overline
x^{-1}(\upsilon)=\overline w^{-1}(\upsilon)$, and value
$\upsilon=\langle x,y_\beta\rangle=\langle w,y_\beta\rangle$ to equal
optimal values $\overline x(\lambda)=\overline w(\lambda)$.
Therefore, if $F^\ast$ is not strictly convex, then elements
$y_\beta\in Y$ may not separate some optimization problems.  Let us
consider two examples.

\begin{example}[Relative information]
Let us define $I_{KL}:Y\times Y\rightarrow\bR\cup\{\infty\}$ as follows
\begin{equation}
I_{KL}(y,y_0):=
\left\{\begin{array}{ll}
\Bigl\langle\ln\frac{y}{y_0},y\Bigr\rangle-\langle 1,y-y_0\rangle &\mbox{ if $y>0$ and $y_0>0$}\\
\langle1,y_0\rangle&\mbox{ if $y=0$ and $y_0>0$}\\
\infty&\mbox{ otherwise}
\end{array}
\right.
\label{eq:information}
\end{equation}
This functional is an extension of the Kullback-Leibler divergence
$\bE_p\{\ln(p/q)\}$ to the whole space $Y$, because $\langle
1,y-y_0\rangle=0$ for positive measures $y$, $y_0$ with equal norms
$\|\cdot\|_1$.  The term $\langle 1,y-y_0\rangle$ makes
$I_{KL}(y,y_0)\geq0$ for all elements $y$ and $y_0$ not necessarily
with equal norms.  If $X$ is a commutative algebra, and the pairing
$\langle\cdot,\cdot\rangle$ is defined by the sum or the
integral~(\ref{eq:pairing}), then (\ref{eq:information}) reduces to
the classical KL-divergence.  In the non-commutative case, such as $X$
being an algebra of compact Hermitian operators and the trace
pairing~(\ref{eq:pairing}), functional~(\ref{eq:information}) is a
generalization of some types of quantum information
\cite{vpb11:_qbic10}, which depend on the way $yy^{-1}_0$ is defined,
such as $\exp(\ln y-\ln y_0)$ or $y_0^{-1/2}yy_0^{-1/2}$.

The functional $F_{KL}(y):=I_{KL}(y,y_0)$ is closed, strictly convex and G\^{a}teaux differentiable on $\Int{\dom F_{KL}}$, and its gradient has the following convenient form:
\[
\nabla F_{KL}(y)=\ln\frac{y}{y_0}\quad\iff\quad
y_0^{1/2}e^x\,y_0^{1/2}=\nabla F_{KL}^\ast(x)
\]
One can define the dual functional $F_{KL}^\ast:X\rightarrow\bR\cup\{\infty\}$ as follows
\[
F_{KL}^\ast(x):=\langle 1,y_0^{1/2}e^{x}\,y_0^{1/2}\rangle
\]
Clearly, $F^\ast_{KL}$ is also closed, strictly convex and G\^{a}teaux
differentiable for all $x\in X$, where it is finite.  Optimal measures
maximizing $x(y)=\langle x,y\rangle$ on sets
$\{y:F_{KL}(y)\leq\lambda\}$ belong to a one-parameter exponential
family $y_\beta:=y_0^{1/2}e^{\beta x}\,y_0^{1/2}$, which are mutually
absolutely continuous.  Such maximizing measures exist for all values
$\lambda\in(\lambda_0,\overline\lambda)$, if $x\in Y^\sharp$ is
$F_{KL}$-bounded above, and by Proposition~\ref{lm:existence} it is
sufficient to show that $\partial F_{KL}^\ast(\beta x)\neq\varnothing$
for some $\beta^{-1}>0$.  We point out that this property depends on
the choice of element $y_0=\nabla F_{KL}^\ast(0)$, minimizing
$F_{KL}$.

Recall also that $Y$ can be considered as a module over algebra $X\subset Y$ (Section~\ref{sec:dual-algebras}).  The exponential mapping $\exp:X\rightarrow X\subset Y$ is the unique (up to the base constant) homomorphism between the additive and multiplicative groups of algebra $X$, and it is injective, because it has a singleton kernel $\{x:\exp(x)=yy^{-1}=1\}=\{0\}$.  The property $\nabla F_{KL}(y)=\ln(yy_0^{-1})=(\exp)^{-1}(yy_0^{-1})$ ensures that information distance $I_{KL}(y,y_0)=F_{KL}(y)$ is additive:  $I_{KL}(p_1p_2,q_1q_2)=I_{KL}(p_1,q_1)+I_{KL}(p_2,q_2)$ for all $p_1p_2$, $q_1q_2\in\cP$.
\label{ex:relative-information}
\end{example}

\begin{figure}[ht]
\begin{center}
\input{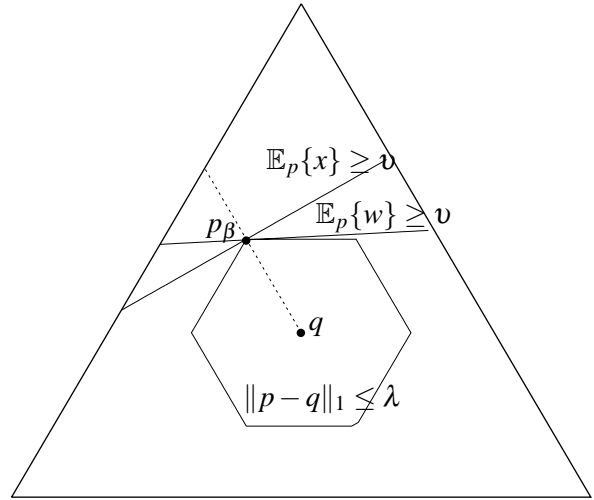}
\end{center}
\caption{$2$-Simplex $\cP$ of probability measures over set $\Omega=\{\omega_1,\omega_2,\omega_3\}$ with level sets of expected utilities $\bE_p\{x\}=\bE_p\{w\}=\upsilon$ and the total variation metric $\|p-q\|_1=\lambda$.  Probability measure $p_\beta$ maximizes both $\bE_p\{x\}$ and $\bE_p\{w\}$ subject to constraint $\|p-q\|_1\leq\lambda$.  The family $\{p_\beta\}_x$ of solutions, shown by dashed line, contains elements on the boundary of $\cP$.}
\label{fig:simplex-l1}
\end{figure}

\begin{example}[Total variation]
An example of information distance that does not have a strictly convex dual is the total variation metric:
\[
I_V(y,y_0):=\|y-y_0\|_1
\]
Functional $F_V(y):=I_V(y,y_0)$ is not G\^{a}teaux differentiable at $y=y_0$, as well as $y$ such that $y-y_0\in[0]\in Y/M^\bot$, if subalgebra $M\subset X$ bounds $X_+$ (e.g. if $M$ contains an extreme ray of $X_+$).   Optimal solutions $y_\beta$ maximizing $x(y)=\langle x,y\rangle$ on sets $C(\lambda):=\{y:\|y-y_0\|_1\leq\lambda\}$ are extreme points of $C(\lambda)$, and they maximize different, not necessarily proportional linear functionals.  Figure~\ref{fig:simplex-l1} illustrates the variational problems on a $2$-simplex of probability measures over a set of three elements with the uniform distribution $q(\omega)=1/3$ as the reference measure (compare with Figure~\ref{fig:simplex-kl}).  Distribution $p_\beta$ maximizes both $\bE_p\{x\}=\langle x,p\rangle$ and $\bE_p\{w\}=\langle w,p\rangle$ on $C(\lambda):=\{p:\|p-q\|_1\leq\lambda\}$.

The dual of $F_V$ is functional $F_V^\ast(x)=\chi_{C_0^\circ(\lambda)}(x)-\langle x,y_0\rangle$, where $\chi_{C_0^\circ(\lambda)}(x)$ is the indicator function of set $C_0^\circ(\lambda)=\{\beta x:\|\beta x\|_\infty\leq1\}$, the polar of set $C_0(\lambda)=C(\lambda)-y_0$.  Clearly, $F_V^\ast(x)$ is not strictly convex.  Therefore, $\partial F_V(y_\beta)$ may include multiple elements, and the family $\{y_\beta\}_x$ may contain measures that are not mutually absolutely continuous.  Figure~\ref{fig:simplex-l1} shows that the family $\{p_\beta\}_x$ of optimal solutions contains elements on the boundary of $2$-simplex $\cP$.

In the commutative case, elements of $\partial F_V(y_\beta)\subset X$ are understood as utility functions, representing preference relations $\lesssim$ on $\Omega\equiv\ext\cP$.  If $\partial F_V(y_\beta)$ includes functions $x$ and $w$, then they attain their suprema $\sup x(\omega)=x(\top)=\|x\|_\infty$ and $\sup w(\omega)=w(\top)=\|w\|_\infty$ on the set of the same elements $\top\in\Omega$.   However, the utility functions $x(\omega)$ and $w(\omega)$ may represent different preference relations $\lesssim$ on $\Omega$.  Note also that the suprema $x(\top)$ or $w(\top)$ of utilities may never be achieved or observed in problems with constraints on information, even if $x$ or $w$ are bounded functions.  The values of utilities on elements $\omega\neq\top$ are important for maximization of the expected utility.
\end{example}

As was discussed in Section~\ref{sec:axioms}, information is often required to satisfy the additivity axiom, which is why information-theoretic definitions of entropy and mutual information are based on the KL-divergence $I_{KL}(y,y_0)$, and it has a strictly convex dual.  Strict convexity of the dual functional is a weaker condition than the additivity axiom, but it ensures that each probability measure $p\in\cP$ is an optimal solution to a unique variational problem with an abstract information resource $F$, generalizing problems~(\ref{eq:max-i}) or (\ref{eq:min-i}).  Note also that strict convexity of $F^\ast$ ensures that information resource $F$ has directional derivative at each $y\in\Int{\dom F}$ (e.g. $p\in\Int{\cP}$), which facilitates convergence of measures in problems with dynamic information.  Thus, strict convexity of the dual functional appears to be a natural requirement on the functional representing information.

\subsection{Support of utility functions and operators}

We now conclude this section by the following corollary about the support of utility functions or operators.  We remind that the support of function $x:\Omega\rightarrow\bR$ is the set $\mathrm{supp}(x):=\{\omega:x(\omega)\neq0\}$.  The support of an operator $x$ on a Hilbert space is defined as a projection onto the orthogonal complement of its kernel (e.g. \cite{Dixmier81}, Appendix~III).  When $x$ is considered as an element of algebra $X$, its restriction to a subset $E\subset\Omega$ (subspace $E\subset\mathcal{H}$) is given by localization $\Pi_Mx$ of $x$ onto subalgebra $M\subset X$ corresponding to $E$.  Thus, the support of $x$ can be identified with the complement of the largest subalgebra $M\subset X$ such that $\Pi_Mx=0$.

\begin{corollary}[Support]
Under the assumptions of Theorem~\ref{th:absolute}, the support of
element $x\in X$ is a subset of the support of optimal measures
$y_\beta$ for all $\lambda\in(\lambda_0,\overline\lambda)$.
\label{cr:support}
\end{corollary}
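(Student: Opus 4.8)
The plan is to observe that the corollary is essentially a by-product of the proof of Theorem~\ref{th:absolute}: the crucial step there already shows that whenever an optimal solution $y_\beta$ vanishes on a subalgebra $M\subset X$, the localization $\Pi_Mx$ must vanish as well. Reading that implication in terms of the two support conventions recalled just above yields the inclusion directly, with no new machinery required.

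First I would fix $\lambda\in(\lambda_0,\overline\lambda)$ and take a maximizer $y_\beta\in\partial F^\ast(\beta x)$ with $0<\beta^{-1}<\infty$, which exists by Proposition~\ref{lm:optimality-conditions}. Let $M\subset X$ be the largest subalgebra on which $y_\beta$ vanishes, so that by definition the support of $y_\beta$ is the complement of $M$; equivalently $y_\beta\in M^\bot=[0]$ in $Y/M^\bot$. Passing to the quotient via the localization operator, I would then use $[y_\beta]\in\partial F^\ast(\beta\Pi_Mx)$ together with the positivity $[y_\beta]=[0]\geq0$ (which holds because $y_\beta\geq0$ and $\Pi_M$ is positive).

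Next I would apply Proposition~\ref{pr:zero} to the zero solution $[y_\beta]=[0]$, obtaining the trichotomy $\Pi_Mx=0$, or $F^{\ast\ast}([0])=\lambda_0$, or $F^{\ast\ast}([0])=\overline\lambda_M$. The decisive point is that strict convexity of $F^\ast$ (inherited by the restriction $F^\ast(\Pi_M\,\cdot)$) forces the non-empty subdifferential $\partial F^{\ast\ast}([0])$ to be a singleton. The second and third cases would instead require $\partial F^{\ast\ast}([0])$ to contain an entire interval $[0,\beta\Pi_Mx]$ or $[\beta\Pi_Mx,\infty)$ with $0<\beta<\infty$, a contradiction. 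Hence $\Pi_Mx=0$ is the only surviving alternative.

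Since $\Pi_Mx=0$ holds for the subalgebra $M$ attached to $y_\beta$, this $M$ is contained in the largest subalgebra annihilating $x$, whose complement is by definition the support of $x$. Taking complements (which reverses inclusion) shows that the support of $x$ lies in the complement of $M$, namely the support of $y_\beta$, for every $\lambda\in(\lambda_0,\overline\lambda)$. The only real obstacle I anticipate is bookkeeping between the two support conventions---support of $y_\beta$ as the complement of the largest $M$ with $y_\beta(M)=0$, versus support of $x$ as the complement of the largest $M$ with $\Pi_Mx=0$---but the single implication $y_\beta(M)=0\Rightarrow\Pi_Mx=0$ established above reconciles them and delivers the stated inclusion.
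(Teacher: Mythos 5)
Your proposal is correct and follows essentially the same route as the paper: the paper's proof simply cites the implication $y_\beta(M)=0\Rightarrow\Pi_Mx=0$ already established inside the proof of Theorem~\ref{th:absolute} and takes the contrapositive, whereas you re-derive that implication in full (via Proposition~\ref{pr:zero} and strict convexity of $F^\ast$) before taking complements. The extra detail is harmless but not a different argument.
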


\begin{proof}
During the proof of Theorem~\ref{th:absolute}, we established under
its assumptions, that if solution $y_\beta(M)=0$ for some
$\lambda\in(\lambda_0,\overline\lambda)$ and $M\subset X$, then the localization
$\Pi_Mx=0$.  Dually, if $\Pi_Mx\neq0$ for some $M\subset X$, then
$y_\beta(M)\neq0$ for all such $y_\beta$.
\end{proof}

Because random variables or observables are considered with respect to normalized positive measures (i.e. probability measures), they can be treated not as elements of algebra $X$, dual of $Y$, but as elements of the factor space $X/\bR1$, generated by subspace $\bR1:=\{\beta1:\beta\in\bR,\ 1\in X\}$ of scalar vectors.  Indeed, statistical manifold $\mathcal{P}$ is a subset of the affine set $\{y:\langle1,y\rangle=1\}=\{1\}_\bot+q$, where $\{1\}_\bot$ is the annihilator of element $1\in X$, and $q\in\mathcal{P}$.  Thus, every probability measure $p\in\mathcal{P}$ is equivalently represented by elements $y\in\{1\}_\bot$ as $p=y+q$.  The dual of subspace $\{1\}_\bot$ is the factor space $X/\bR1$, and random variables are affine sets $[x]=\bR1+x$ corresponding to equivalence classes $[x]=\{w:x-w\in\bR1\}$ and $\langle x-w,p-q\rangle=0$ for any $p$, $q\in\mathcal{P}$.  Observe now that $\bR1$ is the zero element in $X/\bR1$, and therefore the fact that localization $\Pi_Mx\notin\bR1$ implies $p_\beta(M)>0$ for all optimal probability measures (Corollary~\ref{cr:support}).  Dually, $p_\beta(M)=0$ implies that $\Pi_Mx\in\bR1$.  In the language of classical probability this can be stated as follows: if $x(\omega_1)\neq x(\omega_2)$ for some $\omega_1$, $\omega_2\in E\subset\Omega$, then $p_\beta(E)>0$ for all probability measures maximizing $\bE_p\{x\}$ on sets $\{p:F(p)\leq\lambda\}$ for all $\lambda\in(\lambda_0,\overline\lambda)$.  Dually, $p_\beta(E)=0$ implies that $x(\omega)=\mathrm{const}$ for all $\omega\in E$.

\section{Optimal Markov transition kernels}
\label{sec:kernels}

In this section, we consider a composite system, such as a direct product $\Omega=A\times B$ of two sets, and the problem of optimization of transitions between the elements of $A$ and $B$.  Such problems appear in theories of decisions, control, communication and computation, where components of a system (represented by sets $A$, $B$, etc) may have different meanings, but the main objective is to find transitions between the elements of $A$ and $B$ that are optimal with respect to a utility function $x:A\times B\rightarrow\bR$.  In some cases, optimal transitions are deterministic corresponding to some functions $a=f(b)$ or $b\in f^{-1}(a)$.  More generally, non-deterministic transitions are represented by conditional probabilities or Markov transition kernels.  For simplicity, our exposition will be in the classical setting of commutative algebra $X:=C_c(\Omega,\bR,\|\cdot\|_\infty)$ of functions on $\Omega=A\times B$.  This is because joint and conditional probabilities are well-defined and understood in this setting.  In the non-classical case, the analogue of a conditional probability operator can also be defined (e.g. \cite{Accardi-Cecchini82,Petz88:_cond,Takesaki72}), and the results of this section can then be transferred to this setting.  However, this leads to unnecessary complications, which we shall avoid.

\subsection{Markov transition kernels and information constraints}

Let us remind the following definition (e.g. see \cite{Chentsov72}, Sections~2 and 5).

\begin{definition}[Markov transition kernel]
Given two measurable sets $(A,\mathcal{A})$ and $(B,\mathcal{B})$, a {\em Markov transition kernel} is a conditional probability measure $P(A_i\mid b)\in\cP(A)$ on $(A,\mathcal{A})$, which is $\mathcal{B}$-measurable for each $A_i\in\mathcal{A}$.
\label{def:kernel}
\end{definition}

Markov transition kernel defines linear transformation $\Pi:\mathcal{P}(B)\rightarrow\mathcal{P}(A)$ between statistical manifolds $\mathcal{P}(A)$ and $\mathcal{P}(B)$ as follows:
\[
P(A_i)=\Pi P(B_j):=\int_{B_j} P(A_i\mid b)\,dP(b)
\]
Elements $p\in\mathcal{P}(A\times B)$ are joint probability measures
$P(A_i\times B_j)=P(A_i\mid B_j)\,P(B_j)$, and for $P(B_j)>0$, the conditional
probability is defined by the Bayes formula:
\[
P(A_i\mid B_j)=\frac{P(A_i\times B_j)}{P(B_j)}\,,
\]
Event $a\in A$ is statistically independent of $b\in B$ if and only if $P(A_i\mid b)=P(A_i)$ for each $b\in B$ and all $A_i\in\mathcal{A}$.  In this case, $P(A_i\times B_j)=P(A_i)P(B_j)$.  On the other hand, a function $a=f(b)$ defines deterministic dependency of $a$ on $b$, and it corresponds to a deterministic transition kernel
\[
P(A_i\mid b)=\delta_{f(b)}(A_i):=\left\{
\begin{array}{cl}
1 & \mbox{ if $f(b)\in A_i$}\\
0 & \mbox{ otherwise}
\end{array}\right.
\]

One can see that each joint probability measure $p\in\mathcal{P}(A\times B)$ defines a pair of marginal and conditional probability measures $P(B)$ and $P(A\mid B)$ or $P(A)$ and $P(B\mid A)$.  Thus, points of $\cP(A\times B)$ define all possible transition kernels, including all possible measurable functions between $A$ and $B$.  Hence the following classification.

\begin{definition}[Deterministic composite state]
A joint probability measure $p\in\mathcal{P}(A\times B)$ is {\em deterministic}, if and only if it defines a deterministic transition kernel $\delta_{f(b)}(A_i)$ for some measurable function $f:B\rightarrow A$ or $f^{-1}:A\rightarrow B$.  Otherwise, $p$ is {\em non-deterministic}.
\label{def:deterministic}
\end{definition}

Transition kernels are often understood as communication channels giving a more traditional meaning to the notion of information related to the process of sending messages between $A$ and $B$.  The amount of information communicated by $P(A_i\mid b)$ is measured by the Shannon
mutual information \cite{Shannon48}:
\begin{equation}
I_S\{a,b\}:=\int_{A\times B}\left[\ln\frac{dP(a,b)}{dP(a)\,dP(b)}\right]\,dP(a,b)
=\int_B dP(b)\int_A\left[\ln\frac{dP(a\mid b)}{dP(a)}\right]\,dP(a\mid b)
\label{eq:Shannon}
\end{equation}
One can see that $I_S\{a,b\}$ is defined as information distance $I_{KL}(p,q):=\bE_p\{\ln(p/q)\}$ of joint measure $p:=P(A_i\times B_j)$ from the product of marginals $q:=P(A_i)\,P(B_j)$, or as the expectation of the information distance $I_{KL}$ of the conditional probability $P(A_i\mid b)$ from the marginal $P(A_i)$, taken with respect to a fixed marginal $P(B_j)$.

Variational problems~(\ref{eq:max-i}) and (\ref{eq:min-i}) for composite systems and constraints on mutual information have been studied in information theory (e.g. \cite{Shannon48,Stratonovich65,Stratonovich75:_inf}).  Note that when problems~(\ref{eq:max-i}) and (\ref{eq:min-i}) are considered on any measurable set $\Omega$, they are referred to in information theory as problems of the first kind \cite{Stratonovich75:_inf}.  For a composite system $\Omega=A\times B$, one distinguishes between problems of the second and third kind.  Observe that the amount of mutual information~(\ref{eq:Shannon}) communicated depends on $P(B_j)$, which we refer to as an the input or source distribution, and transition probabilities $P(A_i\mid b)$.  In fact, $I_S\{a,b\}=H\{b\}-H\{b\mid a\}$, where $H\{b\}:=\bE_p\{-\ln P(b)\}$ is the entropy of $P(B)$, and $H\{b\mid a\}$ is the conditional entropy.  Optimization problems over input distributions $P(B)$ and with a fixed channel $P(A_i\mid b)$ are problems of the second kind.  Problems of the third kind are concerned with finding an optimal channel for a fixed set of input distributions.  The results of previous sections allow us to consider a generalization of these problems when mutual information is defined by some other information distance $I(p,q)$ between two joint states $p$, $q\in\cP(A\times B)$ or an information resource $F(p)$.  Note that problems of the third kind play important role not only in information theory, but also in other areas including optimal statistical decisions, estimation, control and even in the theory of algorithms, as will be illustrated in Section~\ref{sec:algorithms}.

\subsection{Strict sub-optimality of deterministic kernels}

Observe that $P_f(A_i\times B_j)=\delta_{f(b)}(A_i)\,P(B_j)=0$ for all $f(b)\notin A_i$.  Thus, deterministic transition kernels can be defined only by joint states that are on the boundary of $\cP(A\times B)$; interior points of $\cP(A\times B)$ can define only non-deterministic transition kernels.  The application of Theorem~\ref{th:absolute} to the case $\Omega=A\times B$ yields the following result.

\begin{theorem}[Separation of deterministic and non-deterministic kernels]
Let $\{p_\beta\}_x\subset\mathcal{P}(A\times B)$ be a family of joint
probability measures maximizing expected value $\bE_p\{x\}=\langle x,p\rangle$ of function $x:A\times B\rightarrow\bR$ on sets $\{p:F(p)\leq\lambda\}$ for all values $\lambda$ of a closed functional $F:\cP\rightarrow\bR\cup\{\infty\}$.  If $F^\ast(x):=\sup\{\langle x,p\rangle-F(p)\}$ is strictly convex and $F$ is minimized at $p_0\in\partial F^\ast(0)\subset\Int{\mathcal{P}(A\times B)}$, then
\begin{enumerate}
\item $\{p_\beta\}_x$ contains deterministic $p_f$ if and only if it is a solution to
  an unconstrained problem: $\lambda\geq\overline\lambda$ or $\langle
  x,p_f\rangle=\overline\upsilon:=\overline x(\overline\lambda)=\sup\{\langle x,p\rangle:p\in\cP(A\times B)\}$.
\item The inequality
\[
\langle x,p_f\rangle<\langle x,p_\beta\rangle
\]
holds for all deterministic $p_f\in\mathcal{P}(A\times B)$ such that $F(p_f)=F(p_\beta)\in(\lambda_0,\overline\lambda)$.
\item Similarly, the inequality
\[
F(p_f)>F(p_\beta)
\]
holds for all deterministic $p_f\in\mathcal{P}(A\times B)$ such that $\langle x,p_f\rangle=\langle x,p_\beta\rangle\in(\overline\upsilon_0,\overline\upsilon)$.
\end{enumerate}
\label{th:nofunctions}
\end{theorem}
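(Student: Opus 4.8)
The plan is to turn the geometric dichotomy noted just before the theorem — deterministic kernels live on the boundary of $\mathcal{P}(A\times B)$, interior joint measures are non-deterministic — into a statement about optimizers. A deterministic $p_f$ satisfies $p_f(a,b)=\delta_{f(b)}(a)\,P(b)=0$ whenever $a\neq f(b)$, so its support is the graph of $f$, a proper subset of $A\times B$ once $|A|>1$; hence $p_f\in\partial\mathcal{P}(A\times B)$. The whole theorem therefore reduces to one claim, which I will call the \emph{full-support lemma}: for every finite multiplier, i.e. for every $\lambda\in(\lambda_0,\overline\lambda)$, all joint measures maximizing $\langle x,p\rangle$ on $\{p:F(p)\leq\lambda\}$ have full support, hence lie in $\Int{\mathcal{P}(A\times B)}$ and are non-deterministic.

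To establish the full-support lemma I would reuse the analysis already set up for Theorem~\ref{th:absolute}. By Proposition~\ref{lm:optimality-conditions} any such maximizer has the form $p_\beta\in\partial F^\ast(\beta x)$ with $0<\beta^{-1}<\infty$, and strict convexity of $F^\ast$ makes $\partial F^\ast$ injective. By the remark following Theorem~\ref{th:absolute} (using \cite{Alesker98}, Lemma~4) the image of an injective $\partial F^\ast$ equals $\Int{\dom F}$. Since $\dom F\subseteq\mathcal{P}(A\times B)$ has nonempty interior while its minimizer $p_0\in\partial F^\ast(0)$ is assumed to lie in $\Int{\mathcal{P}(A\times B)}$, the relative interior $\Int{\dom F}$ is contained in $\Int{\mathcal{P}(A\times B)}$, the set of full-support probability measures; consequently every $p_\beta\in\partial F^\ast(\beta x)$ with finite $\beta$ has full support. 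An equivalent route avoiding \cite{Alesker98} is to run the localization argument of Theorem~\ref{th:absolute} verbatim: if $p_\beta(E)=0$ on the subalgebra $M$ corresponding to $E$, then $[p_\beta]=[0]\in Y/M^\bot$, and Proposition~\ref{pr:zero} together with the singleton property of $\partial F^{\ast\ast}([0])$ forces $\Pi_M x=0$ and, through uniqueness of $\partial F^\ast(0)$, forces $p_0(E)=0$, contradicting the full support of $p_0$.

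The three assertions then follow. For (1), the unconstrained problem $\lambda\geq\overline\lambda$ maximizes the linear functional $\langle x,p\rangle$ over the compact simplex $\mathcal{P}(A\times B)$, so the value $\overline\upsilon$ is attained at a vertex $\delta_{(a^\ast,b^\ast)}\in\ext\mathcal{P}$ (a linear functional attains its maximum on a compact convex set at an extreme point), and this Dirac measure is deterministic; conversely, since the maximizers at every $\lambda\in[\lambda_0,\overline\lambda)$ have full support by the lemma, a deterministic — hence boundary — member of $\{p_\beta\}_x$ can occur only for $\beta^{-1}=0$, i.e. for an unconstrained solution with $\langle x,p_f\rangle=\overline\upsilon$. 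For (2), feasibility of a deterministic $p_f$ with $F(p_f)=F(p_\beta)=\lambda\in(\lambda_0,\overline\lambda)$ gives $\langle x,p_f\rangle\leq\overline x(\lambda)=\langle x,p_\beta\rangle$; equality would make $p_f$ itself a maximizer at this proper $\lambda$, contradicting the full-support lemma, so the inequality is strict. Statement (3) is dual: by Remark~\ref{eq:inverse-optimality-conditions} the inverse problem has the same solutions $\partial F^\ast(\beta x)$, so a deterministic $p_f$ with $\langle x,p_f\rangle=\upsilon\in(\overline\upsilon_0,\overline\upsilon)$ is feasible for $\overline x^{-1}(\upsilon)$ and hence $F(p_f)\geq\overline x^{-1}(\upsilon)=F(p_\beta)$, with equality again forcing $p_f$ to be a full-support optimizer — impossible — so $F(p_f)>F(p_\beta)$. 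The strictness of the two value functions on the relevant intervals is supplied by Proposition~\ref{lm:monotone}.

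The main obstacle I expect is the full-support lemma itself: passing from Theorem~\ref{th:absolute}, which as stated produces a single mutually absolutely continuous subfamily, to the assertion that \emph{every} maximizer at a proper constraint level has full support. The delicate point is controlling $\Int{\dom F}$ — verifying $\im\partial F^\ast\subseteq\Int{\mathcal{P}(A\times B)}$ from strict convexity of $F^\ast$ together with the single hypothesis $p_0\in\Int{\mathcal{P}(A\times B)}$ — after which parts (1)--(3) are routine consequences of optimality, the extreme-point principle, and the strict monotonicity of the optimal value functions.
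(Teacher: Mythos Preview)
Your approach is correct and tracks the paper's proof closely. Part~1 in the paper is argued by exactly your second route: a deterministic optimizer $p_f$ at a proper level $\lambda\in(\lambda_0,\overline\lambda)$ would, via the localization argument of Theorem~\ref{th:absolute}, produce some $p_0^\circ\in\partial F^\ast(0)$ vanishing on the same set, hence lying on the boundary of $\mathcal{P}(A\times B)$ --- contradicting the hypothesis $\partial F^\ast(0)\subset\Int{\mathcal{P}(A\times B)}$. One quibble: your phrase ``uniqueness of $\partial F^\ast(0)$'' is neither needed nor implied by strict convexity of $F^\ast$ (which makes $\partial F$ single-valued, not $\partial F^\ast$); the hypothesis already places the \emph{entire} set $\partial F^\ast(0)$ in the interior, which is all the contradiction requires.

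For Parts~2 and~3 the paper takes a slightly different line. Rather than arguing through the optimal value functions and the contrapositive of your full-support lemma, it invokes the Young--Fenchel inequality directly: $\langle x,p_\beta\rangle=\beta^{-1}[F^\ast(\beta x)+F(p_\beta)]$ holds with equality since $p_\beta\in\partial F^\ast(\beta x)$, while $\langle x,p_f\rangle<\beta^{-1}[F^\ast(\beta x)+F(p_f)]$ is strict since Part~1 gives $p_f\notin\partial F^\ast(\beta x)$; substituting $F(p_f)=F(p_\beta)$ yields the strict inequality. Part~3 is the dual computation via $F^{\ast\ast}$. Your version and the paper's are equivalent --- both rest on the single fact that a deterministic $p_f$ cannot belong to $\partial F^\ast(\beta x)$ for $0<\beta^{-1}<\infty$ --- with yours being marginally more elementary and the paper's marginally more explicit.
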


\begin{proof}
\begin{enumerate}
\item ($\Rightarrow$) Assume there exists $p_f\in\{p_\beta\}_x$ for
  $\lambda<\overline\lambda$ (and $\langle
  x,p_f\rangle<\overline\upsilon$), and such that the corresponding
  transition kernel is deterministic: $P_f(A_i\mid B_j)=1$ if
  $A_i=f(B_j)$ and $P_f(A\setminus A_i\mid B_j)=0$.  In this case,
  $p_f:=P_f(A\times B)$ is not in the interior of $\mathcal{P}(A\times
  B)$, because $P_f((A\setminus f(B_j))\times B_j)=0$, and in particular
  $p_f$ does not minimize $F$, because $\partial
  F^\ast(0)\subset\Int{\cP(A\times B)}$ by our assumption.  Thus,
  $F(p_f)=\lambda\in(\lambda_0,\overline\lambda)$.  But then
  $P_f((A\setminus f(B_j))\times B_j)=0$ implies that there exist
  $p_\beta^\circ\in\{p_\beta\}_x$ for all $\lambda\in[\lambda_0,\infty]$
  such that $p_\beta^\circ:=P_\beta^\circ((A\setminus f(B_j))\times
  B_j)=0$ by Theorem~\ref{th:absolute}.  In particular, there exists
  $p_0^\circ\in\partial F^\ast(0)$ such that $P_0^\circ((A\setminus
  f(B_j))\times B_j)=0$, and therefore $p_0^\circ$ is also not in the
  interior of $\mathcal{P}(A\times B)$.  Thus, by contradiction we
  have proven $p_f\notin\{p_\beta\}_x$ or
  $\lambda\geq\overline\lambda$ (and hence $\langle
  x,p_f\rangle=\overline\upsilon$).

  ($\Leftarrow$) If $\lambda\geq\overline\lambda$, then there exists solution
  $\delta_x\in\ext\mathcal{P}(A\times B)$ such that $\langle
  x,\delta_x\rangle=\overline\upsilon:=\sup\{\langle x,p\rangle:p\in\cP\}$ (by linearity of $\langle
  x,\cdot\rangle$ and Krein-Milman theorem for $\cP$), and $\delta_x$
  corresponds to some function $f(b)=a$.
\item For all $x\in X$ and $y\in Y$, the Young-Fenchel inequality
  holds: $\langle x,y\rangle\leq F^\ast(x)+F(y)$.  Moreover, it holds
  with equality if and only if $y\in\partial F^\ast(x)$ (e.g. see
  \cite{Tikhomirov90:_convex}, Chapter~2, Section~4.1, Lemma~3).
  Assume $p_\beta\in\partial F^\ast(\beta x)$.  Then $\langle
  x,p_\beta\rangle=\beta^{-1}[F^\ast(\beta x)+F(p_\beta)]$.  On the
  other hand, if $p_f$ is deterministic and $F(p_f)\leq\lambda<\overline\lambda$, then $p_f\notin\partial F^\ast(\beta x)$ and therefore
\[
\langle x,p_f\rangle<\beta^{-1}[F^\ast(\beta
  x)+F(p_f)]=\beta^{-1}[F^\ast(\beta
  x)+F(p_\beta)]=\langle x,p_\beta\rangle
\]
\item By definition of the Legendre-Fenchel transform,
  $F^{\ast\ast}(y)\geq\langle x,y\rangle-F^\ast(x)$, and the equality
  holds if and only if $x\in\partial F^{\ast\ast}(y)$.  Assume $\beta
  x\in\partial F^{\ast\ast}(p_\beta)$.  Then
  $F^{\ast\ast}(p_\beta)=F(p_\beta)=\beta\langle
  x,p_\beta\rangle-F^\ast(\beta x)$.  On the other hand, if $p_f$ is
  deterministic and $\langle x,p_f\rangle<\overline\upsilon$, then
  $\beta x\notin\partial F^{\ast\ast}(p_f)$, and therefore
\[
F(p_f)\geq F^{\ast\ast}(p_f)>\beta\langle x,p_f\rangle-F^\ast(\beta x)=\beta\langle x,p_\beta\rangle-F^\ast(\beta x)=F(p_\beta)
\]
Note that $\beta>0$ and $F(p_\beta)=\lambda>\lambda_0$, if $\langle x,p_\beta\rangle=\upsilon>\overline\upsilon_0$.
\end{enumerate}
\end{proof}

The assumptions of Theorem~\ref{th:nofunctions} are quite general.
The relation of strict convexity of $F^\ast$ to separating property of
information of variational problems for measures was discussed in
Section~\ref{sec:strict-convexity}.  The assumption
$p_0\in\Int{\mathcal{P}(A\times B)}$ is very natural.  Indeed, each
facet of the simplex $\mathcal{P}(A\times B)$ is also a simplex of
some subset of $A\times B$.  Therefore, the element $p_0$ is always in
the interior of some simplex $\mathcal{P}(A_i\times B_j)$, unless
$p_0=\delta\in\ext\mathcal{P}(A\times B)$.  In all practical cases,
information is minimized at $p_0\notin\ext\mathcal{P}(A\times B)$.  In
particular, one often chooses $p_0:=P(A_i)P(B_j)$, so that $a$ and $b$
are independent, and supports of marginal probabilities $P(A_i)$ and $P(B_j)$ include
more than one element.

To understand better the result of Theorem~\ref{th:nofunctions}, we now recall some facts about mutual information for deterministic kernels and then for exponential kernels, which are an important example of non-deterministic kernels.  These facts will be used in a qualitative example, presented later.

\subsection{Deterministic transition kernels}

Probability measure $P(A_i)=\Pi_fP(B_j)$ defined by a linear transformation with deterministic transition kernel $\delta_{f(b)}(A_i)$ is sometimes denoted $Pf^{-1}(A_i):=P\{b:f(b)\in A_i\}$ (e.g. \cite{Chentsov72}, Section~2).  If $f:B\rightarrow A$ is injective, then $Pf^{-1}(A_i)=P(B_j)$ for each $A_i=f(B_j)$.

\begin{definition}[Measurable isomorphism]
An injective and measurable function $f:B\rightarrow A$ is called a {\em measurable monomorphism} of $B$.  If $f$ is also surjective and $f^{-1}(a)$ is measurable, then $f$ is a {\em measurable isomorphism}.
\label{def:isomorphism}
\end{definition}

We point out the following known result.

\begin{proposition}[Invertible transformation]
A linear transformation $\Pi:\cP(B)\rightarrow\cP(A)$ of statistical manifolds is invertible if and only if its Markov transition kernel is $\delta_{f(b)}(A_i)$, where $f$ is a measurable isomorphism.
\label{pr:invertible}
\end{proposition}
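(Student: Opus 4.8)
The plan is to prove the two implications separately, relying on the structural fact recalled in the Preliminaries that $\cP(A)$ and $\cP(B)$ are simplices whose extreme points $\ext\cP$ are precisely the Dirac measures, together with the elementary observation that $\Pi$ sends a vertex to the corresponding conditional, $\Pi\,\delta_b=P(\cdot\mid b)$.

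For the direction ($\Leftarrow$), suppose the kernel is $\delta_{f(b)}(A_i)$ for a measurable isomorphism $f$. Then $g:=f^{-1}:A\to B$ is measurable, so the deterministic kernel $\delta_{g(a)}(B_j)$ defines a linear transformation $\Pi^{-1}:\cP(A)\to\cP(B)$. I would check on vertices that $\Pi^{-1}\Pi\,\delta_b=\delta_{g(f(b))}=\delta_b$ and likewise $\Pi\,\Pi^{-1}\delta_a=\delta_a$, and then extend to all of $\cP$ by affinity, since every probability measure is the barycenter of its Dirac components. Hence $\Pi$ is invertible.

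For the direction ($\Rightarrow$), the idea is that an invertible linear map of simplices is an affine bijection, and both $\Pi$ and $\Pi^{-1}$ preserve convex combinations; consequently $p\in\ext\cP(B)$ if and only if $\Pi p\in\ext\cP(A)$. Applying this to $p=\delta_b$ forces $P(\cdot\mid b)=\Pi\,\delta_b$ to be an extreme point of $\cP(A)$, i.e. a Dirac $\delta_{f(b)}$; this is exactly the assertion that the kernel is deterministic, $P(A_i\mid b)=\delta_{f(b)}(A_i)$. Injectivity of $\Pi$ then gives injectivity of $f$, and the fact that $\Pi$ maps $\ext\cP(B)$ onto $\ext\cP(A)$ gives surjectivity of $f$, so $f:B\to A$ is a bijection.

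The remaining and most delicate step is to upgrade this set-theoretic bijection to a measurable isomorphism. Measurability of $f$ is immediate from Definition~\ref{def:kernel}: the set $f^{-1}(A_i)=\{b:P(A_i\mid b)=1\}$ is $\mathcal{B}$-measurable because $b\mapsto P(A_i\mid b)$ is. For the inverse I would argue that $\Pi^{-1}$ is itself a Markov transition kernel sending each $\delta_a$ to $\delta_{g(a)}$ with $g=f^{-1}$, so that the same argument applied to $\Pi^{-1}$ yields measurability of $g$, equivalently $f(B_j)=g^{-1}(B_j)\in\mathcal{A}$; thus $f$ is a measurable isomorphism in the sense of Definition~\ref{def:isomorphism}. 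The main obstacle is precisely this last point --- showing that the inverse of an invertible transition kernel is again realized by a (necessarily deterministic) transition kernel --- which is where the extreme-point correspondence and the simplex structure of $\cP$ carry the real weight; alternatively, on the standard Borel spaces assumed here one may invoke the Lusin--Souslin theorem to obtain measurability of $f^{-1}$ directly.
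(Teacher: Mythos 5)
Your proposal is correct and follows essentially the same route as the paper: both arguments rest on the observation that an invertible affine map between the simplices $\cP(B)$ and $\cP(A)$ must carry extreme points (Dirac measures) to extreme points, forcing each $\Pi\delta_b$ to be a Dirac $\delta_{f(b)}$ and $f$ to be a bijection; the paper phrases this as a contradiction (a non-extreme image $\Pi\delta_b=(1-t)\delta_{a_1}+t\delta_{a_2}$ forces $\Pi^{-1}\delta_{a_1}=\Pi^{-1}\delta_{a_2}$, killing injectivity), while you state the contrapositive directly. Your extra care on measurability of $f$ and $f^{-1}$ addresses a point the paper simply asserts, so it is a welcome refinement rather than a different proof.
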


\begin{proof}
($\Rightarrow$)
Assume that the transition kernel of $\Pi$ is not defined by any function.  Thus, $\Pi\delta_b=p\notin\ext\cP(A)$ for some $\delta_b\in\ext\cP(B)$.  Without loss of generality, we can assume that $p=(1-t)\delta_{a_1}+t\delta_{a_2}$ for some $t\in(0,1)$, $\delta_{a_1}$, $\delta_{a_2}\in\ext\cP(A)$ such that $\delta_{a_1}\neq\delta_{a_2}$.  Then
\[
\Pi^{-1}p=\Pi^{-1}[(1-t)\delta_{a_1}+t\delta_{a_2}]=(1-t)\Pi^{-1}\delta_{a_1}+t\Pi^{-1}\delta_{a_2}=\delta_b
\]
Because $\delta_b\in\ext\cP(B)$ is not a convex combination of any points of $\cP(B)$, it implies $\Pi^{-1}\delta_{a_1}=\Pi^{-1}\delta_{a_2}=\delta_b$.  But then $\Pi^{-1}$ is not injective, because $\delta_{a_1}\neq\delta_{a_2}$, and therefore $\Pi$ is not surjective.  Thus, the transition kernel of an invertible $\Pi$ must be $\delta_{f(b)}(A_i)$ for some measurable function $f:B\rightarrow A$.   Clearly, such $\Pi$ is invertible only if the mapping $f:\ext\cP(B)\rightarrow\ext\cP(A)$ is injective, surjective, and both $f$ and $f^{-1}$ are measurable.

($\Leftarrow$)  Obvious.
\end{proof}

Let us consider information communicated by a deterministic transition kernel $\delta_{f(b)}(A_i)$.  The maximum (or supremum) amount of information can be communicated if $f$ is an injective function, because preimage $f^{-1}(a)$ uniquely determines $b$.  If a function is not injective, then $b\in f^{-1}(a)$ is determined up to the probability $1/|f^{-1}(a)|$.  Indeed, for countable $B$ and constant $P(b)$\footnote{The condition $P(b)=\mathrm{const}$ was omitted in the final version.} this can be shown as follows:
\[
P_f(b\mid a)=\frac{P_f(a,b)}{P_f(a)}
=\frac{\delta_{f(b)}(a)\,P(b)}{\sum_B \delta_{f(b)}(a)\,P(b)}
=\frac{1\cdot P(b)}{\sum_{b\in f^{-1}(a)}1\cdot P(b)}
=\frac{1}{|f^{-1}(a)|}
\]
We can express the average amount of information communicated by function $f$ by the following {\em injectivity index} of $f$:
\[
I(f):=\frac{1}{\bE\{|f^{-1}(a)|\}}\leq1
\]
Note that if $B$ is finite, then we can compute the injectivity index as $I(f)=|f(B)|/|B|$.  Indeed, $\sum_{a\in f(B)}|f^{-1}(a)|=|B|$, and so the average value of $|f^{-1}(a)|$ is $|B|/|f(B)|$.  Thus, $I(f)=1$ for an injective function, and $\inf I(f)=0$ corresponding to an empty function.  For constant functions, $I(f)=1/|B|$, and they communicate the least amount of information among non-empty functions.  If $B$ is finite, then $I(f)<1$ implies $|f(B)|<|B|$.  This is not the case, however, for functions defined on an infinite set (e.g. $I(f)=1/2$ for $f:\mathbb{Z}\rightarrow\bN$ defined as $f(b)=|b|$, but $|f(B)|=|B|=\aleph_0$).  Let us show that if the image of a function is infinite, then one can always construct an input distribution $P(B)$ such that the output distribution $Pf^{-1}(A)$ has infinite entropy.

\begin{proposition}[Maximizing input distribution]
Let $(A,\mathcal{A})$ and $(B,\mathcal{B})$ be infinite measurable sets, and let $\{f_n\}$ be a sequence of measurable functions $f_n:B\rightarrow A$ with finite images.  There exists a sequence of probability measures $P_n$ on $\mathcal{B}$ such that
\[
\lim_{|f_n(B)|\rightarrow\infty}\left\{H_n\{a\}=-\sum_{a\in f_n(B)}\ln[P_nf_n^{-1}(a)]\,P_nf_n^{-1}(a)\right\}=\infty
\]
\label{pr:maximising-input}
\end{proposition}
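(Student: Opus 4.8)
The plan is to choose each $P_n$ so that the output (push-forward) measure $P_nf_n^{-1}$ is \emph{uniform} on the image $f_n(B)$, and then to invoke the elementary fact that the entropy of a uniform distribution on a finite set of cardinality $k$ equals $\ln k$. Since the $f_n$ have finite but (as $A$ is infinite) unbounded images, this single observation already forces $H_n\{a\}\to\infty$. In effect, the finite-image functions $f_n$ play the role of truncations of a map with infinite image, and the construction shows that pushing a well-chosen input through such a kernel spreads the output mass as thinly as the image size allows.

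More precisely, fix $n$ and write $k_n:=|f_n(B)|$. Because $f_n$ has finite image, its level sets $C_a:=f_n^{-1}(a)$, indexed by $a\in f_n(B)$, form a finite partition of $B$ into $k_n$ nonempty measurable cells. First I would select a representative point $b_a\in C_a$ in each cell and define the input distribution
\[
P_n:=\frac{1}{k_n}\sum_{a\in f_n(B)}\delta_{b_a}\,.
\]
By construction $P_nf_n^{-1}(a)=P_n(C_a)=1/k_n$ for every $a\in f_n(B)$, so the output distribution is uniform on the image. Substituting into the defining formula for the output entropy gives
\[
H_n\{a\}=-\sum_{a\in f_n(B)}\frac{1}{k_n}\ln\frac{1}{k_n}=\ln k_n=\ln|f_n(B)|\,.
\]
Letting $|f_n(B)|\to\infty$ then yields $H_n\{a\}=\ln|f_n(B)|\to\infty$, which is exactly the claim.

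The only step requiring any care, and hence the closest thing to an obstacle, is measurability: the argument uses that each singleton $\{a\}$ lies in $\mathcal{A}$, so that the cells $C_a=f_n^{-1}(a)$ are measurable, and that a Dirac mass $\delta_{b_a}$ is a legitimate Radon measure on $(B,\mathcal{B})$. Both hold in the separable, metrizable setting assumed throughout the paper, so I do not expect a genuine difficulty here. One could even relax the exact-uniformity requirement: any $P_n$ for which $\min_a P_n(C_a)$ remains comparable to $1/k_n$ would already drive the entropy to infinity, but the uniform choice makes the computation immediate and the bound sharp, since $\ln k_n$ is the maximal entropy attainable on an image of size $k_n$.
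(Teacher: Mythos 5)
Your proof is correct and takes essentially the same approach as the paper: both construct input distributions $P_n$ whose push-forward is uniform on $f_n(B)$ and then read off the entropy as $\ln|f_n(B)|$. The only difference is cosmetic — the paper spreads the mass $1/|f_n(B)|$ of each level set uniformly over its points (assuming $B$ countable), whereas you concentrate it on a single representative via Dirac masses, which if anything sidesteps the countability assumption and any issue with infinite level sets.
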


\begin{proof}
It is sufficient to take $P_n$ on $B$ that induce under the mappings $f_n:B\rightarrow A$ constant (i.e. uniform) probability distributions on the images $f_n(B)$.  For example, assuming without loss of generality that $B$ is countable, define the following function on $B$:
\[
P_n(b)=\frac{1}{|f_n(B)|}\frac{1}{|f_n^{-1}\circ f_n(b)|}
\]
It is a probability measure, because it is positive, additive and $P_n(B)=1$.  Indeed
\[
P_n(B_j)=\frac{1}{|f_n(B)|}\sum_{b\in B_j}\frac{1}{|f_n^{-1}\circ f_n(b)|}
\leq\frac{1}{|f_n(B)|}\sum_{a\in f_n(B_j)} \frac{|f_n^{-1}(a)|}{|f_n^{-1}(a)|}
=\frac{|f_n(B_j)|}{|f_n(B)|}
\]
where equality holds if and only if $B_j=f_n^{-1}\circ f_n(B_j)$.  Then
\[
P_nf_n^{-1}(a)
=\frac{1}{|f_n(B)|}\sum_{b\in f_n^{-1}(a)}\frac{1}{|f_n^{-1}\circ f_n(b)|}
=\frac{1}{|f_n(B)|}\frac{|f_n^{-1}(a)|}{|f_n^{-1}(a)|}
=\frac{1}{|f_n(B)|}
\]
The entropy of $P_nf_n^{-1}(a)$ is $H_n\{a\}=\ln|f_n(B)|$, and it grows infinitely with $|f_n(B)|$.
\end{proof}

It follows from Proposition~\ref{pr:maximising-input} that if the amount of information communicated by a deterministic transition kernel $\delta_{f(b)}(A_i)$ is finite for any input distribution $P(B_j)$, then the image of $f$ must be finite.  Note that this argument is not based on any specific notion of mutual information.  For Shannon information, one can show that the following inequality holds for a deterministic kernel $\delta_{f(b)}(A_i)$:
\begin{eqnarray}
I_S\{a,b\}&=&\sum_{b\in B} P(b)\sum_{a\in A}\left[\ln\frac{\delta_{f(b)}(a)}{Pf^{-1}(a)}\right]\,\delta_{f(b)}(a)\nonumber\\
&=&\sum_{b\in B}P(b)\,\left[\ln\frac{1}{Pf^{-1}\circ f(b)}\right]\leq\ln|f(B)|
\label{eq:shannon-deterministic}
\end{eqnarray}
This inequality is obtained by maximizing $I_S\{a,b\}$ for a fixed deterministic kernel $\delta_{f(b)}(A_i)$ over all input distributions $P(b)$.  The supremum of $I_S\{a,b\}$ is achieved at $P(b)$ inducing a constant distribution $Pf^{-1}(a)$ on $A$, such as the maximizing distribution in Proposition~\ref{pr:maximising-input}.

\subsection{Exponential kernels}
\label{sec:exponential-kernels}

If the function $f:B\rightarrow A$ is not injective, then there exist input distributions $P(B)$ with non-zero entropy such that $Pf^{-1}(a)=1$ for some $a\in A$.  In this case, the output entropy $H\{a\}$ is zero, and the transition kernel communicates no information.  Moreover, if $f:B\rightarrow A$ has infinite domain and finite image, then its injectivity index is zero: $\lim_{|B|\rightarrow\infty}|f(B)|/|B|=0$.  This means that such a function can potentially `loose' an infinite amount of information.  Non-deterministic transition kernels, on the other hand, are quite different in this sense, because there exist kernels that always communicate some information.  An important example are exponential transition kernels.

Let $\Omega=A\times B$ and $x:A\times B\rightarrow\bR$ be a utility function.  Consider variational problems~(\ref{eq:max-i}) and (\ref{eq:min-i}) with $I_{KL}(p,q):=\bE_p\{\ln[p/q]\}$ defining Shannon mutual information~(\ref{eq:Shannon}).  The unique solutions to these problems are joint probability measures $p_\beta\in\mathcal{P}(A\times B)$ that belong to a one-parameter exponential family:
\[
dP_\beta(a,b)=e^{\beta\,[x(a,b)+\Phi(\beta^{-1})]}\,dP(a)\,dP(b)\,,
\]
where $\Phi(\beta^{-1})$ is determined from the normalization
condition
\[
e^{-\beta\,\Phi(\beta^{-1})}=\int_{A\times B}e^{\beta\,x(a,b)}\,dP(a)\,dP(b)
\]
The corresponding exponential transition kernels are
\[
dP_\beta(a\mid b)=e^{\beta\,[x(a,b)+\Phi(\beta^{-1},b)]}\,dP(a)\,,\qquad
dP_\beta(b\mid a)=e^{\beta\,[x(a,b)+\Phi(\beta^{-1},a)]}\,dP(b)
\]
where $\Phi(\beta^{-1},b)$ and $\Phi(\beta^{-1},a)$ now depend on $b$
and $a$, as they are computed using partial integrals:
\[
e^{-\beta\,\Phi(\beta^{-1},b)}=\int_A e^{\beta\,x(a,b)}\,dP(a)\,,\quad
e^{-\beta\,\Phi(\beta^{-1},a)}=\int_B e^{\beta\,x(a,b)}\,dP(b)
\]

If the product $e^{\beta\,\Phi(\beta^{-1},b)}\,dP(b)$ does not depend
on $b$, and $e^{\beta\,\Phi(\beta^{-1},a)}\,dP(a)$ does not depend on
$a$, then exponential kernels do not depend on the marginal measures
$dP(a)$ and $dP(b)$ respectively.  Indeed, because
$dP(a)=\int_BdP(a,b)$ and $dP(b)=\int_AdP(a,b)$, we have the following
equations
\[
\int_B e^{\beta[x(a,b)+\Phi(\beta^{-1},b)]}\,dP(b)=1\,,\qquad
\int_A e^{\beta[x(a,b)+\Phi(\beta^{-1},a)]}\,dP(a)=1
\]
Then, using the facts that $e^{\beta\,\Phi(\beta^{-1},b)}\,dP(b)$ and $e^{\beta\,\Phi(\beta^{-1},a)]}\,dP(a)$ are constants, we obtain:
\[
e^{-\beta\,\Phi(\beta^{-1},b)}=[dP(b)/db]\int_B e^{\beta\,x(a,b)}\,db\,,\quad
e^{-\beta\,\Phi(\beta^{-1},a)}=[dP(a)/da]\int_A e^{\beta\,x(a,b)}\,da
\]
Using these relations and the Bayes formula the exponential transition kernels can be written in the following simple form
\[
dP_\beta(a\mid b)=\frac{e^{\beta\,x(a,b)}\,da}{\int_A e^{\beta\,x(a,b)}\,da}\,,\qquad
dP_\beta(b\mid a)=\frac{e^{\beta\,x(a,b)}\,db}{\int_B e^{\beta\,x(a,b)}\,db}
\]
Here, the normalizing integrals are constant, because they do not depend on
$a$ or $b$, and one can introduce the {\em free energy} function
$\Phi_0(\beta^{-1}):=-\beta^{-1}\ln\int_B e^{\beta\,x(a,b)}\,db$ or
the {\em free cumulant generating function}
$\Psi_0(\beta)=-\beta\Phi_0(\beta^{-1})$.  If one of the marginal
distributions, say $P(B)$, is fixed, then Shannon information has the
following expression:
\begin{eqnarray}
I_S\{a,b\}&=&\int_A dP(a)\int_B\left[\ln\frac{dP(b\mid a)}{dP(b)}\right]\,dP(b\mid a)\nonumber\\
&=&\int_A dP(a)\int_B\Bigl\{\beta\,x(a,b)-\ln\int_B e^{\beta\,x(a,b)}\,db-\ln[dP(b)/db]\Bigr\}\,dP(b\mid a)\nonumber\\
&=&\beta\,\bE_{p_\beta}\{x\}-\Psi_0(\beta)+H\{b\}\,,\label{eq:i-exp-kernel}
\end{eqnarray}
Observe also that the expected utility is the derivative of
$\Psi_0(\beta)=\ln\int_B e^{\beta\,x(a,b)}\,db$:
\begin{equation}
\bE_{p_\beta}\{x\}=\int_A dP(a)\int_B\frac{x(a,b)\,e^{\beta\,x(a,b)}}{\int_B e^{\beta\,x(a,b)}\,db}\,db=\frac{d\Psi_0(\beta)}{d\beta}\int_AdP(a)=\Psi'_0(\beta)
\label{eq:u-exp-kernel}
\end{equation}
Here, $H\{b\}=-\int_B\ln[dP(b)/db]\,dP(b)$ is the differential entropy
of $P(B)$ (assuming that the density $dP(b)/db$ exists).  Also,
because $I_S\{a,b\}=H\{b\}-H\{b\mid a\}$, the difference
$\Psi_0(\beta)-\beta\,\Psi'_0(\beta)$ is the conditional differential
entropy $H\{b\mid a\}$.  Expected utility defined by
equation~(\ref{eq:u-exp-kernel}) is independent of the input
distribution $P(B)$.

One can show that the products $e^{\beta\,\Phi(\beta^{-1},b)}\,dP(b)$ and $e^{\beta\,\Phi(\beta^{-1},a)}\,dP(a)$ are constant when $A=(A,+)$ and $B=(B,+)$ are equivalent locally compact groups with invariant measures $da$ and $db$, and the utility function is translation invariant: $x(a+c,b+c)=x(a,b)$.  An important example is when $A$ and $B$ are equivalent linear spaces, and $x(a,b)$ depends only on the difference $a-b$ (e.g. $x(a,b)=-\frac12\|a-b\|^2$).  In such cases, the simplified expressions and equations~(\ref{eq:i-exp-kernel}) and (\ref{eq:u-exp-kernel}) can be applied.

Joint exponential measures $P_\beta$ are mutually absolutely continuous for all $\beta\geq0$.  Furthermore, by Corollary~\ref{cr:support} about the support of utility functions $x(a,b)$ and due to normalization of probability measures, condition $P_\beta(A_i\times B_j)=0$ implies $x(a,b)$ is constant on $A_i\times B_j$, and one may extend this to the case $x(a,b)=-\infty$.  As is well known, exponential distributions approximate the
Dirac $\delta$-function for $\beta\rightarrow\infty$.  The corresponding joint probability measures define deterministic transition kernels $\delta_{f(b)}(a)$, where function $f$ is such that $x(f(b),b)=\sup_{a\in A} x(a,b)$, and one may include the case $\sup x(a,b)=\infty$.

\subsection{Qualitative example}
\label{sec:cauchy}

Strict inequalities of Theorem~\ref{th:nofunctions} present an
interesting opportunity for constructing an example such that $\langle
x,p_f\rangle=-\infty$ or $F(p_f)=\infty$ for any deterministic
transition kernel satisfying a proper information constraint
$F(p)\leq\lambda<\overline\lambda$ or a non-trivial expected utility
constraint $\bE_p\{x\}=\langle
x,p\rangle\geq\upsilon>\overline\upsilon_0$.  If solutions $p_\beta$
to the corresponding variational problems exist, then inequalities $\langle
x,p_\beta\rangle>-\infty$ or $F(p_\beta)<\infty$ suggest that a
non-deterministic transition kernel satisfying the same constraints
may have a finite expected utility and information.  Such an example
would provide qualitative rather than quantitative illustration.  Let
us consider one prototypical example.

Let $a\in A$ and $b\in B$ be real variables, and let us consider the
problem of information transmission between $A$ and $B$ that is
optimal with respect to a measurable utility function $x:A\times
B\rightarrow\bR$.  If $b\in(\bR,\mathcal{B},P)$ is a random variable
with known distribution, then the expected utility $\bE_p\{x\}$ is:
\[
\bE_p\{x\}=\int_A\int_B x(a,b)\,dP(a,b)
=\int_BdP(b)\int_A x(a,b)\,dP(a\mid b)
=\int_B\bE_p\{x\mid b\}\,dP(b)
\]
Here $\bE_p\{x\mid b\}$ denotes the conditional expected utility, and
it is maximized by choosing the optimal conditional probability
measure $dP(a\mid b)$.  The maximum of information is communicated by
an injective function $a=f(b)$, defining a deterministic transition
kernel.  The optimal function is such that $x(f(b),b)=\sup_{a\in A}
x(a,b)$.  On the other hand, if no information can be communicated,
then $dP(a\mid b)=dP(a)$.  A deterministic kernel communicating no
information is defined by a constant function.  Note, however, that
one can still choose an optimal constant function $\bar a_1=f(b)$.
Indeed, if $x(a,b)$ is differentiable and concave in $a$, then $\bar
a_1$ is a solution to the equation $\nabla_a\int_B x(a,b)\,dP(b)=0$.  In
particular, if $x(a,b)=-\frac12(a-b)^2$, then $\nabla_a\int_B
x(a,b)\,dP(b)=\int_B (b-a)\,dP(b)$, and $\bar a_1=\int_B
b\,dP(b)=\bE_p\{b\}$, which is the well-known classical method
minimizing mean-squared deviation.  Thus, for constant $f(b)=a_1$
\[
\bE_{p_f}\{x\}=-\frac12\int_B(a_1-b)^2\,dP(b)
\leq-\frac12\int_B(\bE_p\{b\}-b)^2\,dP(b)
=-\frac12\mathrm{Var}\{b\}
\]
The value on the right depends on the distribution $P(B)$, and there
are many examples of distributions with unbounded variance, such as
$dP(b)=[\pi(b^2+1)]^{-1}\,db$ (the Cauchy distribution).  Indeed, the
integral $\int_B (a-b)^2(b^2+1)^{-1}\,db$ does not converge on
$B=(-\infty,\infty)$.

Let us assume now that some limited information can be communicated so
that $dP(a\mid b)\neq dP(a)$ (and hence $dP(b\mid a)\neq dP(b)$).  For
example, this can be the information associated with $b$ belonging to
some subset of $B$, such as $b>0$ or $b\leq0$.  In each case, one can
choose different optimal elements $\bar a_1$ and $\bar a_2$.  A more
`precise' information would correspond to a larger number of subsets
$B_i\subset B$ and optimal elements $\bar a_i$, such that
\[
\bE_{p_f}\{x\}\leq-\frac12\sum_{i=1}^n\int_{B_i} (\bar a_i-b)^2\,dP(b)
\]
Observe that the value above still depends on $P(B)$, and because for
any finite partition of the real line there are some unbounded
intervals, one can take $P(B)$ giving a negatively infinite value on
the right.  For example, if $P(B)$ is the Cauchy distribution, then
the integral $\int(a-b)^2(b^2+1)\,db$ does not converge on the
intervals $B_1=(-\infty,0]$ or $B_2=[0,\infty)$.  Thus, $b$ can be
  distributed in such a way that the expected value of utility
  $x(a,b)=-\frac12(a-b)^2$ cannot be larger than $-\infty$ for any
  deterministic $p_f$ with finite image $|f(B)|$.  The expected
  utility can have finite values only if $f$ has an infinite image.
  By the argument of Proposition~\ref{pr:maximising-input}, however,
  this means that the function can communicate an infinite amount of
  information.  Let us show now that there exist non-deterministic
  transition kernels for this problem achieving finite expected
  utility and communicating finite amount of information.

Indeed, consider an exponential kernel from
Section~\ref{sec:exponential-kernels}, optimal for constraints on
Shannon mutual information.  Because the utility function
$x(a,b)=-\frac12(a-b)^2$ is translation invariant $x(a+c,b+c)=x(a,b)$,
we can use the simplified expressions from
Section~\ref{sec:exponential-kernels}.  In particular,
$\Psi_0(\beta)=\ln\sqrt{2\pi\beta^{-1}}$, and the exponential kernel
is Gaussian
\[
dP_\beta(a\mid b)=\frac1{\sqrt{2\pi\beta^{-1}}}\,e^{-\beta\frac12(a-b)^2}\,da
\]
Conditional expectation $\bE_{p_\beta}\{x\mid b\}$ is constant for all
$b\in B$:
\[
\bE_{p_\beta}\{x\mid b\}
=-\frac12\frac1{\sqrt{2\pi\beta^{-1}}}\int^{\infty}_{-\infty}(a-b)^2\,e^{-\beta\frac12(a-b)^2}\,da
=-\frac12\frac{\sqrt{2\pi\beta^{-3}}}{\sqrt{2\pi\beta^{-1}}}=-\frac12\beta^{-1}
\]
and therefore
\[
\bE_{p_\beta}\{x\}=\int_B\bE_{p_\beta}\{x\mid b\}\,dP(b)=-\frac12\,\beta^{-1}
\]
The expression above can also be easily obtained from
equation~(\ref{eq:u-exp-kernel}) as the derivative of
$\Psi_0(\beta)=\ln\sqrt{2\pi\beta^{-1}}$.  The optimal value
$\beta^{-1}\geq0$ depends on the amount $\lambda$ of mutual
information, and it can be computed using
equation~(\ref{eq:i-exp-kernel}) by inverting $\lambda=I_S\{a,b\}$:
\[
\beta=2\pi e^{1-2[H\{b\}-\lambda]}
\]
The value $\beta$ depends on the difference $H\{b\}-\lambda$, which
equals to the conditional differential entropy $H\{b\mid a\}$, because
$I_S\{a,b\}=H\{b\}-H\{b\mid a\}=\lambda$.  Therefore, if $H\{b\mid
a\}$ is finite, then $\beta>0$, and $\bE_{p_\beta}\{x\}$ is finite for
all $\lambda>0$.

Other examples can be constructed using the same principles.  For
instance, if $A=B=\bN$, and the utility function $x(a,b)$ is a
polynomial of degree $m\geq1$, then one can distribute $b\in B$
according to $P(b)=[b^{m+1}\zeta(m+1)]^{-1}$, where
$\zeta(k)=\sum_{b\in\bN}b^{-k}$ is the Riemann zeta function.  In this
case, the expected utility is negatively infinite for any
deterministic kernel $\delta_{f(b)}(a)$, if $f$ has finite image
satisfying a finite information constraint.  The optimal transition
kernels satisfying both finite expected utility and finite information
constraints in such problems are non-deterministic.  These examples
demonstrate that deterministic and non-deterministic transition
kernels are qualitatively different, because their expected utilities
can be separated by infinity.

\subsection{Application: Deterministic and non-deterministic algorithms}
\label{sec:algorithms}

Because Markov transition kernels give a non-deterministic generalization of functions, they can be used to model various input-output or information processing systems.  Computational machines and algorithms are examples of such systems, and we now discuss how they can be represented by transition kernels and the corresponding variational problems.  Results of this work may have interesting applications to the study of algorithms and computation.

An algorithm $\Gamma$ is defined as a system of computations transforming input words $w_0$ in some finite alphabet into output (e.g. final) words $w_t$ (e.g. \cite{Markov-Nagornyi88}).  Each word in the domain of definition of $\Gamma$ can be considered as initial word $w_0$.  In a deterministic algorithm, the computation process is performed by a sequence of transformations $\gamma(w_t)=w_{t+1}$ of words, where $\gamma$ is called the {\em direct processing} operator \cite{Kolmogorov-Uspensky58} or a transition function.  In a non-deterministic algorithm, these transitions are randomized according to some local probabilities.  The computational process may terminate reaching a final word (answer), terminate without reaching a final word (error) or continue the computations indefinitely.  In addition, when computation terminates with a non-final word, one may distinguish between errors of the first and second kinds (i.e. false positives and false negatives).  Algorithms may be restricted to run in polynomial time of the size of input words or produce only certain types of errors (i.e. one-sided errors).

The computational cost of $\Gamma(w_0)$ can be associated with resources or complexity of computations, such as the length of the output sequence $(w_1,\dots,w_t)$, if $w_t$ is final:
\[
l(\Gamma(w_0),w_0):=\left\{\begin{array}{ll}
t& \mbox{ if $\Gamma(w_0)=(w_1,\dots,w_t)$ and $w_t$ is a final word}\\
\infty & \mbox{ otherwise}
\end{array}\right.
\]
A Boolean loss function can be defined by $\delta_\infty(l(\Gamma(w_0),w_0))$, where $\delta_\infty(\cdot)$ indicates an error (i.e. one, if the algorithm does not terminate or terminates with a non-final word).  A utility of computation can be defined by any function proportional to negative loss, such as Boolean utility $x(\Gamma(w_0),w_0)=1-\delta_\infty(l(\Gamma(w_0),w_0))$.  Maximization of expectation $\bE_p\{x\}$ for Boolean utility is maximization of the probability that computation terminates with a final word.

Both deterministic and non-deterministic algorithms compute a function from the set of input words $w_0$, for which the computation terminates with an answer, onto the set of final words $w_t$.  The main difference is that a non-deterministic algorithm can compute the pair $(w_0,w_t)$ in different ways and with different running times, so that the cost or utility of a non-deterministic computation is a random variable.  We can represent algorithms by Markov transition kernels as follows.

Let $B$ be the set of all input words $w_0$, and let $A$ be the set of all, possibly infinite, output word sequences $\{w_t\}$.  A deterministic algorithm corresponds to a deterministic Markov transition kernel $\delta_{\Gamma(b)}(a)$, so that each input word is mapped to a particular output word sequence: $B\ni w_0\mapsto\Gamma(w_0)=(w_1,\dots,w_t,\dots)\in A$.  A non-deterministic algorithm assigns non-zero probabilities $P_\Gamma(a\mid b)$ to different output sequences.  We say that two algorithms are equivalent, if they correspond to identical Markov transition kernels.  Points in the set $\cP(A\times B)$, which is a Choquet simplex, correspond to equivalence classes of all deterministic and non-deterministic algorithms, defined on $B$, together with all distributions $P(B)$ of input words.  This formalism allows us to consider optimization of algorithms in the context of variational problems~(\ref{eq:max-i}), (\ref{eq:min-i}) and their generalizations.

Indeed, optimization of a class of algorithms subject to constraint $\bE_p\{l\}\leq\upsilon$ on the expected loss or a constraint $\bE_p\{x\}\geq\upsilon$ on the expected utility has been considered in complexity theory (e.g. see \cite{Goldreich08}).  For example, the complexity class of bounded error probabilistic polynomial time machines (BPP) is defined as a class of problems solved by non-deterministic algorithms with constraints on the expected error (i.e. $\bE_p\{x\}\geq\upsilon>1/2$, where $x$ is Boolean utility).  Information constraints have also been considered in complexity theory, such as constraints on communication capacity (communication complexity) or in the class of probabilistically checkable proofs (PCP), which is defined as a non-deterministic algorithm with constraints on randomness and a number of queries to an oracle (i.e. a constraint on information amount about the proof).  Problems of optimization of algorithms can be considered as a search for the corresponding class of optimal Markov transition kernels (i.e. variational problems of the third kind in information theory).  The optimal value functions~(\ref{eq:support})--(\ref{eq:inverse-anti-support}) put the expected utility constraint $\bE_p\{x\}\geq\upsilon$ in duality with a constraint $F(p)\leq\lambda$ on an information resource.  Thus, the study of performance and computational complexity of the algorithms is related to the study of their information constraints.

\section{Discussion}
\label{sec:discussion}

We have studied families of optimal measures using a generalization of the
classical variational problems of information theory \cite{Shannon48,Stratonovich65} and statistical physics \cite{Jaynes57}.  In fact, standard formulae of these theories relating Gibbs measures, free energy, entropy and channel capacity can be recovered simply by defining information constraints using the Kullback-Leibler divergence.  The main motivation for the generalization was understanding the mutual absolute continuity of measures within optimal families, and it was established that such families exist if an abstract information resource has a strictly convex dual, which is a geometric rather than algebraic property of information.  We have discussed also that strict convexity of the dual functional is related to separability of different variational problems, which is useful in the context of optimization.  Our method does not depend on commutativity of the algebra of random variables or observables, and for this reason the result holds both for commutative (classical) and non-commutative (quantum) measures.

Mutual absolute continuity of optimal probability measures allowed us
to show that deterministic transition kernels are strictly
sub-optimal.  This result is important not only for applications of
optimization theory, but also for some theoretical questions in
studies of algorithms and computational complexity, where much of the
effort is devoted to the question whether non-deterministic procedures
have any qualitative advantage over deterministic.  Our results
suggest that in a broad class of optimization problems with
constraints on information optimal deterministic kernels do not exist.
Moreover, an example has been constructed to show that the difference
between expected utilities of deterministic and non-deterministic
kernels can be infinite for all proper constraints on an information
resource.

These results about strict sub-optimality of deterministic kernels do
not contradict the established understanding in the classical theory
of statistical decisions that asymptotically randomized policies
cannot be better than deterministic (e.g. see
\cite{Stratonovich75:_inf} or more recently \cite{Kozen-Ruozzi09}).
Indeed, these asymptotic results are concerned with obtaining all,
possibly infinite amount of information, in which case there are
deterministic optimal kernels.  Our results, on the other hand, are
about optimality subject to constraints making such asymptotic
solutions unfeasible.  Note also that a simple randomization of a
function's output can only decrease (loose) the amount of information
it communicates.  However, we have compared deterministic and
non-deterministic kernels that can communicate the same amount of
information.  The possibility to separate deterministic and
non-deterministic transitions qualitatively (i.e. by infinity) is
particularly interesting, because it confirms a common intuition in
applied optimization about numerous problems, in which
non-deterministic algorithms outperform all known deterministic
methods.

\paragraph{Acknowledgements}
I would like to express my gratitude to Paul Blampied, Vladimir Goncharov, Pando Georgiev, Satoshi Iriyama and Serguei Novak for valuable discussions of the early drafts of this paper.  Special thanks go my father, Viacheslav Belavkin, for clarifying some algebraic and non-commutative issues, and to my mother for her support during these discussions.  I am also indebted to my girlfriend Oliya for her love and inspiration.  This work was supported by the United Kingdom Engineering and Physical Sciences Research Council (EPSRC) grant EP/H031936/1.

\bibliographystyle{spmpsci}      

\bibliography{rvb,nn,other,newbib,ica}

\begin{thebibliography}{10}
\providecommand{\url}[1]{{#1}}
\providecommand{\urlprefix}{URL }
\expandafter\ifx\csname urlstyle\endcsname\relax
  \providecommand{\doi}[1]{DOI~\discretionary{}{}{}#1}\else
  \providecommand{\doi}{DOI~\discretionary{}{}{}\begingroup
  \urlstyle{rm}\Url}\fi

\bibitem{Accardi-Cecchini82}
Accardi, L., Cecchini, C.: Conditional expectations in von {N}eumann algebras
  and a theorem of {T}akesaki.
\newblock Journal of Functional Analysis \textbf{45}(2), 245--273 (1982)

\bibitem{Alesker98}
Alesker, S.: Integrals of smooth and analytic functions over {M}inkowski's sums
  of convex sets.
\newblock In: K.M. Ball, V.~Milman (eds.) Convex Geometric Analysis, vol.~34,
  pp. 1--15. MSRI Publications (1998)

\bibitem{Amari85}
Amari, S.I.: Differential-Geometrical Methods of Statistics, \emph{Lecture
  Notes in Statistics}, vol.~25.
\newblock Springer, Berlin, Germany (1985)

\bibitem{Amari-Ohara11}
Amari, S.I., Ohara, A.: Geometry of $q$-exponential family of probability
  distributions.
\newblock Entropy \textbf{13}, 1170--1185 (2011)

\bibitem{Asplund-Rockafellar69}
Asplund, E., Rockafellar, R.T.: Gradients of convex functions.
\newblock Transactions of the American Mathematical Society \textbf{139},
  443--467 (1969)

\bibitem{Banerjee05}
Banerjee, A., Merugu, S., Dhillon, I.S., Ghosh, J.: Clustering with {B}regman
  divergences.
\newblock Journal of Machine Learning Research \textbf{6}, 1705–--1749 (2005)

\bibitem{Belavkin09:_qbic09}
Belavkin, R.V.: Utility and value of information in cognitive science, biology
  and quantum theory.
\newblock In: L.~Accardi, W.~Freudenberg, M.~Ohya (eds.) {Q}uantum
  {B}io-{I}nformatics {III}, \emph{{QP}-{PQ}: Quantum Probability and White
  Noise Analysis}, vol.~26. World Scientific (2010)

\bibitem{Belavkin10:_dis10}
Belavkin, R.V.: On evolution of an information dynamic system and its
  generating operator.
\newblock Optimization Letters pp. 1--14 (2011).
\newblock 10.1007/s11590-011-0325-z

\bibitem{vpb11:_qbic10}
Belavkin, V.P.: New types of quantum entropies and additive information
  capacities.
\newblock In: L.~Accardi, W.~Freudenberg, M.~Ohya (eds.) {Q}uantum
  {B}io-{I}nformatics {IV}, {QP}-{PQ}: Quantum Probability and White Noise
  Analysis, pp. 61--89. World Scientific (2011)

\bibitem{Bobkov-Zegalinski05}
Bobkov, S.G., Zegarlinski, B.: Entropy bounds and isoperimetry.
\newblock Memoirs of the American Mathematical Society \textbf{176}(829) (2005)

\bibitem{Bourbaki63}
Bourbaki, N.: El\'ements de math\'ematiques. Int\'egration.
\newblock Hermann (1963)

\bibitem{Chentsov72}
Chentsov, N.N.: Statistical Decision Rules and Optimal Inference.
\newblock Nauka, Moscow, U.S.S.R. (1972).
\newblock In Russian, English translation: Providence, RI: AMS, 1982

\bibitem{Cramer46}
Cram\'er, H.: Mathematical Methods of Statistics.
\newblock Princeton University Press, Princeton, NJ (1946)

\bibitem{Csiszar91}
Csisz\'{a}r, I.: Why least squares and maximum entropy? {A}n axiomatic approach
  to inference for linear inverse problems.
\newblock Annals of Statistics \textbf{19}(4), 2032--2066 (1991)

\bibitem{Dixmier81}
Dixmier, J.: von {N}eumann algebras.
\newblock North-Holland Publishing Company, Amsterdam-New York (1981)

\bibitem{Goldreich08}
Goldreich, O.: Computational Complexity: {A} Conceptual Perspective.
\newblock Cambridge University Press (2008)

\bibitem{Jaynes57}
Jaynes, E.T.: Information theory and statistical mechanics.
\newblock Physical Review \textbf{106, 108}, 620--630, 171--190 (1957)

\bibitem{Kachurovskii68}
Kachurovskii, R.I.: Nonlinear monotone operators in {B}anach spaces.
\newblock Russian Mathematical Surveys \textbf{23}(2), 117--165 (1968)

\bibitem{Khinchin57}
Khinchin, A.I.: Mathematical Foundations of Information Theory.
\newblock Dover, New York (1957)

\bibitem{Kirkpatrick83}
Kirkpatrick, S., Gelatt, C.D., Vecchi, J.M.P.: Optimization by simulated
  annealing.
\newblock Science \textbf{220}(4598), 671--680 (1983)

\bibitem{Kolmogorov-Uspensky58}
Kolmogorov, A.N., Uspenskii, V.A.: On the definition of an algorithm.
\newblock Uspekhi Mat. Nauk \textbf{13}(4), 3--28 (1958).
\newblock In Russian

\bibitem{Kozen-Ruozzi09}
Kozen, D., Ruozzi, N.: Applications of metric coinduction.
\newblock Logical Methods in Computer Science \textbf{5}(3:10), 1--19 (2009)

\bibitem{Kullback59}
Kullback, S.: Information Theory and Statistics.
\newblock John Wiley and Sons (1959)

\bibitem{Markov-Nagornyi88}
Markov, A.A., Nagornyi, N.M.: The theory of algorithms.
\newblock Kluwer Academic, Dordrecht, Boston, London (1988).
\newblock Translated from Russian

\bibitem{Moreau67}
Moreau, J.J.: Functionelles Convexes.
\newblock Lectrue Notes, S\'eminaire sur les \'equations aux deriv\'ees
  partielles. Coll\'ege de France, Paris (1967)

\bibitem{Naudts08}
Naudts, J.: Generalised exponential families and associated entropy functions.
\newblock Entropy \textbf{10}, 131--149 (2008)

\bibitem{Neumann-Morgenstern}
von Neumann, J., Morgenstern, O.: Theory of games and economic behavior, first
  edn.
\newblock Princeton University Press, Princeton, NJ (1944)

\bibitem{Petz88:_cond}
Petz, D.: Conditional expectation in quantum probability.
\newblock Lecture Notes in Mathematics \textbf{1303}, 251--260 (1988)

\bibitem{Phelps66}
Phelps, R.R.: Lectures on Choquet's theorem, \emph{Lecture Notes in
  Mathematics}, vol. 1757, 2nd edn.
\newblock Springer, Berlin (2001)

\bibitem{Pistone-Sempi95}
Pistone, G., Sempi, C.: An infinite-dimensional geometric structure on the
  space of all the probability measures equivalent to a given one.
\newblock The Annals of Statistics \textbf{23}(5), 1543--1561 (1995)

\bibitem{Rao45}
Rao, C.R.: Information and the accuracy attainable in the estimation of
  statistical parameters.
\newblock Bulletin of the Calcutta Mathematical Society \textbf{37}, 81–--89
  (1945)

\bibitem{Rockafellar74}
Rockafellar, R.T.: Conjugate Duality and Optimization, \emph{{CBMS-NSF}
  Regional Conference Series in Applied Mathematics}, vol.~16.
\newblock Society for Industrial and Applied Mathematics, PA (1974)

\bibitem{Shannon48}
Shannon, C.E.: A mathematical theory of communication.
\newblock Bell System Technical Journal \textbf{27}, 379--423 and 623--656
  (1948)

\bibitem{Stratonovich65}
Stratonovich, R.L.: On value of information.
\newblock Izvestiya of USSR Academy of Sciences, Technical Cybernetics
  \textbf{5}, 3--12 (1965).
\newblock In Russian

\bibitem{Stratonovich75:_inf}
Stratonovich, R.L.: Information Theory.
\newblock Sovetskoe Radio, Moscow, USSR (1975).
\newblock In Russian

\bibitem{Streater04}
Streater, R.F.: Quantum {O}rlicz spaces in information geometry.
\newblock In: The 36th {C}onference on {M}athematical {P}hysics, \emph{Open
  Systems and Information Dynamics}, vol.~11, pp. 350--375. Torun (2004)

\bibitem{Takesaki72}
Takesaki, M.: Conditional expectations in von {N}eumann algebras.
\newblock Journal of Functional Analysis \textbf{9}(3), 306–--321 (1972)

\bibitem{Tikhomirov90:_convex}
Tikhomirov, V.M.: Analysis {II}, \emph{Encyclopedia of Mathematical Sciences},
  vol.~14, chap. Convex Analysis, pp. 1--92.
\newblock Springer-Verlag (1990)

\bibitem{Wainwright-Jordan03}
Wainwright, M.J., Jordan, M.I.: Graphical models, exponential families, and
  variational inference.
\newblock Tech. Rep. 649, University of California, Berkeley (2003)

\end{thebibliography}

\end{document}